\documentclass[reqno]{amsart}

\usepackage[T1]{fontenc}

\usepackage{amsmath,amsfonts,amssymb,amsthm,amscd}
\usepackage{latexsym}
\usepackage{cite}
\usepackage{mathrsfs}
\usepackage{color}
\usepackage{comment}
\usepackage{float}
\usepackage{hyperref}
\usepackage{bbm}
\hypersetup{
  colorlinks=true,
  linkcolor=blue,
  citecolor=blue,
  urlcolor=blue
}

\newtheorem{thm}{Theorem}[section]
\newtheorem{theorem}[thm]{Theorem}
\newtheorem{proposition}[thm]{Proposition}

\newtheorem{lemma}[thm]{Lemma}

\newtheorem{corollary}[thm]{Corollary}

\theoremstyle{definition}

\newtheorem{example}[thm]{Example}
\newtheorem{question}[thm]{Question}

\theoremstyle{remark}
\newtheorem{remark}[thm]{Remark}

\newcommand{\cA}{\mathcal A}

\newcommand{\cM}{\mathcal M}

\newcommand{\cQ}{\mathcal Q}

\newcommand{\C}{\mathbb C}
\newcommand{\Z}{\mathbb Z}
\newcommand{\one}{\mathbf 1}

\newcommand{\Span}{\operatorname{span}_{\C}}
\newcommand{\Ann}{\operatorname{Ann}}
\newcommand{\End}{\operatorname{End}}
\newcommand{\Hom}{\operatorname{Hom}}
\newcommand{\Inder}{\operatorname{Inder}}
\newcommand{\Der}{\operatorname{Der}}
\newcommand{\id}{\operatorname{id}}

\numberwithin{equation}{section}
\allowdisplaybreaks
\begin{document}

%================================================
% Title and author information
%================================================

\title[The Pozhidaev and Cantarini--Kac Constructions]
{The Pozhidaev and Cantarini--Kac Constructions of Simple \(n\)-Lie Algebras: Distinctions and Realizations}
\author[Xinru Cao]{Xinru Cao}

\address[Xinru Cao]{%
School of Mathematics and Statistics,
Northeast Normal University,
Changchun 130024, China}

\email{caoxinru@nenu.edu.cn}

\author[Bakhrom A. Omirov]{Bakhrom A. Omirov}

\address[Bakhrom A. Omirov]{%
Institute for Advanced Study in Mathematics,
Harbin Institute of Technology,
Harbin 150001, China}

\address[Bakhrom A. Omirov]{%
Suzhou Research Institute,
Harbin Institute of Technology,
Suzhou 215104, China}

\email{omirovb@mail.ru}

\author[Yuhui Tan]{Yuhui Tan}
\address[Yuhui Tan]{Institute for Advanced Study in Mathematics,
Harbin Institute of Technology,
Harbin 150001, China}

\email{26B312001@stu.hit.edu.cn}

%================================================
% Classification and keywords
%================================================

% Compatibility with older installations of amsart.
\makeatletter
\@ifundefined{subjclassname@2020}{%
  \@namedef{subjclassname@2020}{\textup{2020} Mathematics Subject Classification}%
}{}
\makeatother
\subjclass[2020]{17A42, 17B65}

\keywords{$n$-Lie algebra; Filippov algebra; inner derivation;
divergence-free Lie algebra; Jacobian algebra}

%================================================
% Abstract
%================================================

\begin{abstract}
	Let $n\geq3$, let $H\subseteq\C^n$ be any additive subgroup
	spanning $\C^n$, and let $0\ne t\in H$.
	We study Pozhidaev's central simple $n$-Lie algebra
	$P(H,t)=\widetilde{\mathcal A}(H,t)/\C e_0$ without a finite
	generation or discreteness assumption on $H$.
	Its inner derivation algebra is the simple generalized
	divergence-free Lie algebra $\mathcal S(0,0,n;t,H)$.
	We prove that its space of inner-equivariant symmetric products
	vanishes and that every $1/n$-derivation is a scalar multiple of
	the identity. Using these invariants, we show that $P(H,t)$
	is not isomorphic to any simple nonabelian $n$-Lie algebra
	defined on the underlying spaces of the $S$, $W$, or $SW$
	constructions recorded by Cantarini and Kac.
	We also show that $P(H,t)$ is the quotient by the constants
	of the derived algebra of an explicit $S$-algebra on $\C[H]$.
	Finally, we realize Pozhidaev's second construction $E(H)$
	over $\C$ as a $W$-algebra.
\end{abstract}

\maketitle

\section{Introduction}

An $n$-Lie algebra, also called a Filippov algebra, is a vector space equipped with an alternating $n$-linear product whose adjoint operators are derivations. These algebras originated in Nambu's higher Hamiltonian formalism \cite{Nambu} and were axiomatized by Filippov \cite{Filippov}. Over an algebraically closed field of characteristic zero, the finite-dimensional simple $n$-Lie algebra is unique up to isomorphism for every $n\geq3$ and has dimension $n+1$ \cite{Ling}. Infinite-dimensional examples exhibit considerably more variety. Determinant, Jacobian, and Wronskian constructions provide important classes of such algebras \cite{DzhumadildaevJacobian,DzhumadildaevWronskians,Takhtajan,JumaniyozovOmirov}.

Cantarini and Kac classified simple linearly compact $n$-Lie superalgebras in characteristic zero \cite{CantariniKac}. Their Appendix~A also describes three algebraic constructions,
$$S(A,\mathfrak g),\qquad W(A,\mathfrak g),\qquad SW(A,D).$$
Pozhidaev had earlier constructed central simple $n$-Lie algebras from additive subgroups of finite-dimensional vector spaces \cite{PozhidaevMonomial,PozhidaevCentral}.
The present paper studies intrinsic operator spaces of his first construction and uses them to compare these algebras with the three full-space constructions above. We also describe explicitly the central reduction that relates the first construction to an $S$-algebra, and realize his second construction $E(H)$ as a $W$-algebra.

Throughout the paper the ground field is $\C$, and $n\geq3$. Let $H\subseteq\C^n$ be an additive subgroup satisfying $\Span H=\C^n$, and let $0\ne t\in H$. Write
$$P(H,t)=\widetilde{\mathcal A}(H,t)/\C e_0.$$
This algebra has a basis $\{u_a:a\in H\setminus\{0,t\}\}$ with bracket
\begin{equation}\label{eq1.1}
[u_{a_1},\ldots,u_{a_n}]=\det(a_1,\ldots,a_n)u_{a_1+\cdots+a_n+t},\qquad u_0=u_t=0.
\end{equation}
Our notation includes the lattice specialization $P_{n,t}=P(\Z^n,t)$.

We prove that
\[
\Inder P(H,t)\cong\mathcal S(0,0,n;t,H),
\]
where the right-hand side is a simple generalized divergence-free Lie
algebra studied by Su and Xu \cite{SuXu}. We also determine the
associated operator spaces $\cM(P(H,t))$ and $\cQ(P(H,t))$, obtaining
\[\cM(P(H,t))=0, \qquad \cQ(P(H,t))=\C\id_{P(H,t)}.
\]
Associative multiplication supplies a nonzero element of $\cM(S(A,\mathfrak g))$ whenever the bracket is nonzero. For the $W$-construction, left multiplication embeds $A$ into $\cQ(W(A,\mathfrak g))$ under the hypotheses stated below. The inner derivation algebra of a nonabelian $SW(A,D)$ with $D$-simple $A$ has a nonzero proper ideal of $A$-linear operators. These facts yield the following comparison.

\begin{theorem}[Main theorem]\label{thm1.1}
Let $n\geq3$, let $H\subseteq\C^n$ be an additive subgroup with $\Span H=\C^n$, and let $0\ne t\in H$. The central simple $n$-Lie algebra $P(H,t)$ is not isomorphic to any simple nonabelian algebra obtained on the full underlying space by one of the constructions
$$S(A,\mathfrak g),\qquad W(A,\mathfrak g),\qquad SW(A,D)$$
under the hypotheses of Section~\ref{subsec:CK-constructions}. Here the full underlying space is $A$ for $S$ and $W$, and $A^{\oplus(n-1)}$ for $SW$; no derived subalgebra or central quotient is included in this statement.
\end{theorem}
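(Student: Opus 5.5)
The plan is to separate $P(H,t)$ from the three constructions by isomorphism invariants, treating each construction in turn. The invariants are the space $\cM$ of inner-equivariant symmetric products, the space $\cQ$ of $1/n$-derivations, and the ideal structure of $\Inder$. Any isomorphism $\varphi\colon L\to L'$ of $n$-Lie algebras conjugates $\Inder L$ onto $\Inder L'$, since $\varphi\,\mathrm{ad}(x_1,\dots,x_{n-1})\,\varphi^{-1}=\mathrm{ad}(\varphi x_1,\dots,\varphi x_{n-1})$. For the same reason it transports $\cM(L)$ to $\cM(L')$ and $\cQ(L)$ to $\cQ(L')$, via $\mu\mapsto\varphi\circ\mu\circ(\varphi^{-1}\times\varphi^{-1})$ and $q\mapsto\varphi q\varphi^{-1}$. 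Consequently $\dim\cM$, $\dim\cQ$, and the simplicity of $\Inder$ are isomorphism invariants. For $P(H,t)$ I take these as established earlier: $\cM(P(H,t))=0$, $\cQ(P(H,t))=\C\id$, and $\Inder P(H,t)\cong\mathcal S(0,0,n;t,H)$, which is simple by Su--Xu.

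\emph{Case $S(A,\mathfrak g)$.} Suppose $S(A,\mathfrak g)$ is nonabelian and simple on $A$. Its bracket is built from $\mathfrak g$-operators that are derivations of $A$. I would show that the associative product $\mu(a,b)=ab$ lies in $\cM(S(A,\mathfrak g))$. It is symmetric, and each inner derivation is a derivation of $A$, hence satisfies $D(ab)=D(a)b+aD(b)$, which is exactly the equivariance condition. Moreover $\mu\neq0$ because $A$ is unital, or at least $A^2\neq0$ under the hypotheses of Section~\ref{subsec:CK-constructions}, which is what makes the bracket nonzero. So $\cM\neq0=\cM(P(H,t))$ and the algebras are not isomorphic.

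\emph{Case $W(A,\mathfrak g)$.} Here I would use left multiplications $L_a\colon x\mapsto ax$. The $W$-bracket is $A$-multilinear up to the alternating determinant-type structure, so each $L_a$ should be a $1/n$-derivation, i.e. $L_a[x_1,\dots,x_n]=\frac1n\sum_i[x_1,\dots,ax_i,\dots,x_n]$. The hypotheses in Section~\ref{subsec:CK-constructions} should make $a\mapsto L_a$ injective; for instance, $A$ unital with a faithful action. Then $\dim\cQ(W(A,\mathfrak g))\ge\dim A\ge2$, since a simple nonabelian $W$-algebra forces $A\neq\C$ because the bracket would otherwise vanish. This contradicts $\cQ(P(H,t))=\C\id$. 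The main obstacle is checking the identity above from the precise formula of the $W$-bracket, which is a Wronskian-like determinant in $\mathfrak g$-derivations with an extra column. I would verify it by expanding along columns and using the Leibniz rule, in the form recorded in Section~\ref{subsec:CK-constructions}.

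\emph{Case $SW(A,D)$ with $A$ $D$-simple.} Here the invariant is $\Inder$. The subspace of $A$-linear operators in $\Inder SW(A,D)$ is a nonzero proper ideal. It is an ideal because commutators of derivation-type operators with $A$-linear ones stay $A$-linear. It is nonzero and proper in the nonabelian case, as stated in the introduction. Hence $\Inder SW(A,D)$ is not simple, whereas $\Inder P(H,t)\cong\mathcal S(0,0,n;t,H)$ is simple, so again there is no isomorphism. Combining the three cases proves the theorem.
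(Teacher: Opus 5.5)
Your architecture is the paper's: the same three invariants, with associative multiplication, left multiplications, and the $A$-linear inner derivations as witnesses. Your $S$ case is complete, since $\operatorname{ad}(f_1,\dots,f_{n-1})$ is an $A$-linear combination of the $D_i$ (cofactor expansion in the last column) and a nonzero bracket forces $A^2\neq0$.

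The $W$ case has two unsupported steps. First, injectivity of $a\mapsto L_a$ cannot rest on unitality or a ``faithful action'', because Section~\ref{subsec:CK-constructions} explicitly allows non-unital $A$. The missing idea is that $\Ann(A)=\{a:aA=0\}$ is a $\mathfrak g$-invariant ideal, since $D(a)b=D(ab)-aD(b)=0$. So $\mathfrak g$-simplicity forces $\Ann(A)\in\{0,A\}$, and $\Ann(A)=A$ would kill every product in \eqref{eq2.3}. Second, the $W$-bracket is not ``$A$-multilinear'' in any sense that makes $L_a\in\cQ$ automatic. That heuristic does not distinguish $W$ from $S$, and for the $S$-bracket the identity fails in general. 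Write $u(af)=a\,u(f)+f\,v(a)$ with $v(a)=(0,D_1a,\dots,D_{n-1}a)^T$. Then $\sum_i[f_1,\dots,af_i,\dots,f_n]_W$ equals $na[f_1,\dots,f_n]_W$ plus the error term $\sum_{k\ge1}D_k(a)\sum_i f_i\,\mathcal C_{ki}$, where $\mathcal C_{ki}$ are the cofactors of the matrix in \eqref{eq2.3}. This term vanishes only because the top row of that matrix is $(f_1,\dots,f_n)$, so each inner sum is a determinant with two equal rows. That cancellation is the actual content of the verification, and your sketch omits it.

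The $SW$ case is not argued at all. ``As stated in the introduction'' just restates Proposition~\ref{prop6.2}. ``Commutators of derivation-type operators with $A$-linear ones stay $A$-linear'' presupposes a structural fact that must be computed from \eqref{eq2.4}. You need every inner derivation to have the form $\Delta_c+\phi$, with $\phi$ $A$-linear and $\Delta_c\colon f^{\langle i\rangle}\mapsto(cD(f))^{\langle i\rangle}$ for the \emph{same} $c\in A$ on every color. If the coefficient were $c_i$ on color $i$, the commutator with an $A$-linear color-changing operator $f^{\langle p\rangle}\mapsto(\theta f)^{\langle q\rangle}$, evaluated on $(af)^{\langle p\rangle}$, would contain the non-$A$-linear term $((c_q-c_p)\theta fD(a))^{\langle q\rangle}$. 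Uniformity does hold. For full-color fixed arguments $a_j^{\langle j\rangle}$, moving the variable $f^{\langle i\rangle}$ next to $a_i^{\langle i\rangle}$ costs $(-1)^{n-1-i}$, which cancels $(-1)^{i+n-1}$ and gives $c=-a_1\cdots a_{n-1}$ for every $i$. Brackets with a missing color give $A$-linear operators of type \eqref{eq6.4}.

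Nonzeroness and properness also need explicit witnesses built from a nonzero homogeneous bracket. Dropping a singly occurring color gives a nonzero $A$-linear inner derivation. The full-color derivation $\delta$ from the same bracket has first-order part $\pm\Delta_c$ with $cD(f)\neq0$ for suitable $c,f$. Then $\Ann(A)=0$, again by $D$-simplicity and nonabelianity, supplies $g$ with $\delta((fg)^{\langle i\rangle})-f\delta(g^{\langle i\rangle})=\pm(cD(f)g)^{\langle i\rangle}\neq0$, so $\delta$ is not $A$-linear.
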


This qualification is essential. In Section~\ref{sec7} we construct an abelian $n$-dimensional Lie algebra $\mathfrak g_t$ of derivations of the group algebra $\C[H]$ such that
\begin{equation}\label{eq1.2}
P(H,t)\cong\frac{[S,\ldots,S]}{\langle\mathbbm{1}\rangle},\qquad S=S(\C[H],\mathfrak g_t).
\end{equation}
Thus the nonisomorphism theorem does not assert that $P(H,t)$ cannot be obtained from the three constructions by passing to derived subalgebras and central quotients. In particular, it does not settle the existence question in \cite[Appendix~A]{CantariniKac} under an interpretation allowing those operations.

Section~\ref{sec2} gives the definitions. Section~\ref{sec3} identifies the inner derivation algebra. Sections~\ref{sec4} and~\ref{sec5} determine $\cM(P(H,t))$ and $\cQ(P(H,t))$, respectively, and Section~\ref{sec6} completes the comparison with the three constructions. Section~\ref{sec7} proves \eqref{eq1.2}; the final section realizes $E(H)$ as a $W$-algebra.

\section{Preliminaries and comparison constructions}\label{sec2}

An \emph{$n$-Lie algebra} is a complex vector space $L$ equipped with an alternating $n$-linear bracket satisfying the fundamental identity
$$[x_1,\ldots,x_{n-1},[y_1,\ldots,y_n]]=\sum_{i=1}^n [y_1,\ldots,[x_1,\ldots,x_{n-1},y_i],\ldots,y_n].$$
For $x_1,\ldots,x_{n-1}\in L$, the map $\operatorname{ad}(x_1,\ldots,x_{n-1})(y) =[x_1,\ldots,x_{n-1},y]$ 
is therefore a derivation of $L$. The Lie subalgebra of $\End_{\C}(L)$ spanned by these maps is denoted by $\Inder(L)$.  An ideal of $L$ is a subspace $I$ such that $[I,L,\ldots,L]\subseteq I$.  We call $L$ simple when $[L,\ldots,L]\neq0$ and its only ideals are $0$ and $L$. The centroid of an $n$-Lie algebra $L$, denoted by
$\operatorname{Cent}(L)$, consists of all $\C$-linear maps $T:L\to L$ satisfying
$$T([x_1,\ldots,x_n])=[x_1,\ldots,T(x_i),\ldots,x_n]$$
for all $x_1,\ldots,x_n\in L$ and every $1\leq i\leq n$. A simple algebra $L$ is called \emph{central simple} if $\operatorname{Cent}(L)=\C\id_L$.

\subsection{Pozhidaev's algebra for an arbitrary spanning subgroup}

Fix an additive subgroup $H\subseteq\C^n$ with $\Span H=\C^n$ and an element $0\ne t\in H$. On
$$\mathcal A(H,t)=\bigoplus_{a\in H}\C e_a$$
consider the determinant bracket
\begin{equation}\label{eq2.1}
[e_{a_1},\ldots,e_{a_n}]=\det(a_1,\ldots,a_n)e_{a_1+\cdots+a_n+t}.
\end{equation}
By Pozhidaev's construction \cite[Theorem~2.1]{PozhidaevMonomial}, \eqref{eq2.1} defines an $n$-Lie algebra structure on $\mathcal A(H,t)$. An explicit realization as an $S$-algebra is also given in Section~\ref{sec7}. 

If $a_1+\cdots+a_n=0$, the determinant vanishes. Consequently
$$\widetilde{\mathcal A}(H,t)=\bigoplus_{a\in H\setminus\{t\}}\C e_a$$
is an $n$-Lie subalgebra. Since $t\ne0$, it contains the central element $e_0$. Define
$$P(H,t)=\widetilde{\mathcal A}(H,t)/\C e_0,\qquad \Lambda=H\setminus\{0,t\}.$$
The images $u_a=e_a+\C e_0$, $a\in\Lambda$, form a basis, and their bracket is \eqref{eq1.1}. We always interpret $u_0=u_t=0$.

Since $2t\in H\setminus\{t\}$, this set still spans $\C^n$. Thus $P(H,t)$ is central simple by \cite[Theorem~2.2]{PozhidaevCentral}. It is infinite-dimensional because $H$ contains the infinite cyclic subgroup $\Z t$.

For the weight calculations set
$$K=H\cap\C t.$$
This is an arbitrary additive subgroup of the line $\C t$. 

\subsection{The constructions in Appendix~A of Cantarini--Kac}\label{subsec:CK-constructions}

We use the determinant formulas recorded in \cite[Appendix~A]{CantariniKac}, with their underlying spaces made explicit. Let $A$ be a commutative associative complex algebra, not necessarily unital, and let $\mathfrak g\subseteq\Der A$ be a Lie algebra. Assume that $A$ has no nonzero proper associative ideal invariant under every element of $\mathfrak g$; we call this condition $\mathfrak g$-simplicity.

If $\dim\mathfrak g=n$ and $D_1,\ldots,D_n$ is a basis of $\mathfrak g$, then $S(A,\mathfrak g)$ is the vector space $A$ with bracket
\begin{equation}\label{eq2.2}
 [f_1,\ldots,f_n]_S
 =\det\!\begin{pmatrix}
 D_1(f_1)&\cdots&D_1(f_n)\\
 \vdots&&\vdots\\
 D_n(f_1)&\cdots&D_n(f_n)
 \end{pmatrix}.
\end{equation}

If $\dim\mathfrak g=n-1$ and $D_1,\ldots,D_{n-1}$ is a basis of $\mathfrak g$, then $W(A,\mathfrak g)$ is again the vector space $A$, now with bracket
\begin{equation}\label{eq2.3}
 [f_1,\ldots,f_n]_W =\det\!\begin{pmatrix}
 f_1&\cdots&f_n\\
 D_1(f_1)&\cdots&D_1(f_n)\\
 \vdots&&\vdots\\
 D_{n-1}(f_1)&\cdots&D_{n-1}(f_n)
 \end{pmatrix}.
\end{equation}

For the third construction, take $\mathfrak g=\C D$ and assume that $A$ has no nonzero proper $D$-invariant ideals.  Put
$$ SW(A,D)=A^{\langle1\rangle}\oplus\cdots\oplus A^{\langle n-1\rangle},$$
where $A^{\langle j\rangle}$ is a copy of $A$ and $f^{\langle j\rangle}$ denotes the copy of $f\in A$.  A bracket of homogeneous elements is zero unless every color $1,\ldots,n-1$ occurs.  If color $k$ is repeated, the defining nonzero bracket is
\begin{align}\label{eq2.4}
 &[f_1^{\langle1\rangle},\ldots, f_{k-1}^{\langle k-1\rangle},f_k^{\langle k\rangle}, f_{k+1}^{\langle k\rangle},f_{k+2}^{\langle k+1\rangle},\ldots, f_n^{\langle n-1\rangle}]\notag\\
 &\quad=(-1)^{k+n-1} \bigl(f_1\cdots f_{k-1} (D(f_k)f_{k+1}-f_kD(f_{k+1})) f_{k+2}\cdots f_n\bigr)^{\langle k\rangle},
\end{align}
extended by alternating multilinearity.

Throughout, these symbols denote the algebras on the underlying
vector spaces specified above. The $\mathfrak g$-simplicity of $A$
does not in general imply simplicity of the corresponding
$n$-Lie algebra; for example, $1$ is central in $S(A,\mathfrak g)$
whenever $A$ is unital. Our nonisomorphism results concern the
simple nonabelian members of these constructions. Derived
subalgebras and central quotients are treated separately in
Section~\ref{sec7}.

\section{The Lie algebra of inner derivations}\label{sec3}

We first determine the homogeneous components of $\Inder P(H,t)$. Its degree-zero subalgebra will provide the weight decomposition used in Section~\ref{sec4}.

For $r\in H$ and $\lambda\in(\C^n)^*$, define an operator on $P(H,t)$ by
$$ X_{r,\lambda}(u_a)=\lambda(a)u_{a+r},\qquad a\in\Lambda,$$
where $u_0=u_t=0$.

\begin{proposition}
We have
\begin{equation}\label{eq3.1}
 \Inder P(H,t)=\bigoplus_{\substack{r\in H\\r\ne t}}
 \{X_{r,\lambda}:\lambda(r-t)=0\}.
\end{equation}
In particular, $\mathfrak h=\{X_{0,\lambda}:\lambda(t)=0\}$ is an $(n-1)$-dimensional abelian subalgebra of $\Inder P(H,t)$.
\end{proposition}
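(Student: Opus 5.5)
We compute the adjoint operators directly, show that they span exactly the right-hand side of \eqref{eq3.1}, and then read off $\mathfrak h$.

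\emph{Step 1: adjoint operators are homogeneous $X$'s.} For $a_1,\ldots,a_{n-1}\in\Lambda$ put $r=a_1+\cdots+a_{n-1}+t$ and let $\lambda\in(\C^n)^*$ be $\lambda(x)=\det(a_1,\ldots,a_{n-1},x)$. Then \eqref{eq1.1} gives
$$\operatorname{ad}(u_{a_1},\ldots,u_{a_{n-1}})=X_{r,\lambda}.$$
The convention $u_0=u_t=0$ is respected: the target index is $a+r$, and the operator $X_{r,\lambda}$ is defined with the same convention. We have
$$\lambda(r-t)=\det\Bigl(a_1,\ldots,a_{n-1},\textstyle\sum a_i\Bigr)=0.$$
If $r=t$, then $\sum a_i=0$ and $\lambda$ is linear in $a_{n-1}=-\sum_{i<n-1}a_i$, which forces $\lambda=0$. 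Hence $\Inder P(H,t)$ is contained in the right-hand side of \eqref{eq3.1}. The sum there is direct because the $X_{r,\lambda}$ with different $r$ shift degrees differently. To make this rigorous, note that $X_{r,\lambda}=0$ only when $\lambda$ vanishes on $\Lambda-$ a finite exceptional set, and that set still spans $\C^n$; so $\lambda\mapsto X_{r,\lambda}$ is injective.

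\emph{Step 2: the reverse inclusion (main obstacle).} Fix $r\in H$ with $r\neq t$ and set $s=r-t\in H$. The space $\{\lambda:\lambda(s)=0\}$ has dimension $n-1$ when $s\neq0$, and dimension $n$ when $s=0$, i.e.\ $r=t$; the latter case is excluded. So assume $s\neq0$. We need $(n-1)$-tuples $a_1,\ldots,a_{n-1}\in\Lambda$ with $\sum a_i=s$ whose functionals $\det(a_1,\ldots,a_{n-1},\cdot)$ span $s^{\perp}$. Such a functional is determined up to scalar by the hyperplane spanned by the $a_i$; that hyperplane contains $s$, and we want these hyperplanes to vary enough. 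Since $H$ spans $\C^n$, choose $h_1,\ldots,h_{n-1}\in H$ such that $s,h_1,\ldots,h_{n-2}$ are linearly independent. Take
$$a_i=m_ih_i\ (i\le n-2),\qquad a_{n-1}=s-\textstyle\sum_{i\le n-2} m_ih_i,$$
with integers $m_i$. Then $\det(a_1,\ldots,a_{n-1},x)=\bigl(\prod m_i\bigr)\det(h_1,\ldots,h_{n-2},s,x)$, up to sign. This produces the functional vanishing on $\operatorname{span}(s,h_1,\ldots,h_{n-2})$. Choosing the $m_i$ large avoids the finitely many forbidden values $a_i\in\{0,t\}$. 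Letting the $(n-2)$-subsets of a spanning family of $H$ vary (complementing $s$), these functionals span all of $s^{\perp}$. Indeed, the functionals $x\mapsto\det(s,v_1,\ldots,v_{n-2},x)$ for $v_j$ ranging over a basis taken from $H$ and extending $s$ form a basis of $s^{\perp}$. Therefore every $X_{r,\lambda}$ with $\lambda(r-t)=0$ is a linear combination of adjoint operators.

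\emph{Step 3: the subalgebra $\mathfrak h$.} Taking $r=0$ gives $\mathfrak h=\{X_{0,\lambda}:\lambda(t)=0\}$, which has dimension $n-1$ by the injectivity in Step 1. It is abelian because each $X_{0,\lambda}$ acts diagonally on the basis $u_a$ with eigenvalue $\lambda(a)$.

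The bookkeeping with the excluded indices $0,t$ (Steps 1 and 2) is the only delicate point.
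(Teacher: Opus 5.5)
Your proof is correct and follows essentially the same route as the paper. You identify $\operatorname{ad}(u_{a_1},\ldots,u_{a_{n-1}})$ with $X_{r,\lambda}$ for the determinant functional, get the reverse inclusion from integer dilations of vectors in $H$ completing $r-t$ to a basis (so that the functionals $\det(\ldots,\widehat{v_j},\ldots,r-t,\cdot)$ span $\Ann(r-t)$), and use shift comparison for directness and a cofinite subset of $H$ for injectivity. The only differences from the paper, namely independent multipliers $m_i$ instead of a single $m$ and the ``$H$ minus a finite set still spans'' argument in place of evaluating at $u_{ma}$, are cosmetic.
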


\begin{proof}
An elementary inner derivation has the form
$$\operatorname{ad}(u_{p_1},\ldots,u_{p_{n-1}})(u_a)=\det(p_1,\ldots,p_{n-1},a)u_{a+p_1+\cdots+p_{n-1}+t}.$$
Put $r=p_1+\cdots+p_{n-1}+t$ and $\lambda(a)=\det(p_1,\ldots,p_{n-1},a)$. Then $\lambda(r-t)=0$. If $r=t$, the relation $\sum\limits_i p_i=0$ makes $p_1,\ldots,p_{n-1}$ linearly dependent, and hence $\lambda=0$. This proves one inclusion in \eqref{eq3.1}.

Conversely, fix $r\ne t$ and put $v=r-t$. Since $H$ spans $\C^n$, there exist $q_1,\ldots,q_{n-1}\in H$ such that $q_1,\ldots,q_{n-1},v$ form a basis. The functionals
$$\mu_j(a)=\det(q_1,\ldots,\widehat q_j,\ldots,q_{n-1},v,a),\qquad 1\le j\le n-1,$$
form a basis of $\Ann(v)$. For a fixed $j$, take $p_1,\ldots,p_{n-2}$ to be the vectors $m q_k$ with $k\ne j$, in their natural order, and put $p_{n-1}=v-p_1-\cdots-p_{n-2}.$
Here $m$ is a nonzero integer. The $q_k$ are nonzero and their indicated sum is nonzero. Consequently only finitely many choices of $m$ can make one of the $p_i$ equal to $0$ or $t$. Choose any other $m$. Then all $p_i\in\Lambda$, their sum is $v$, and
$$\det(p_1,\ldots,p_{n-1},a)=m^{n-2}\mu_j(a).$$
Thus every $X_{r,\mu_j}$ is inner, proving the reverse inclusion.

For each fixed $r$, the map $\lambda\mapsto X_{r,\lambda}$ is injective. Indeed, if $\lambda\ne0$, choose $a\in H$ with $\lambda(a)\ne0$. For all but finitely many $m\in\Z\setminus\{0\}$, both $ma$ and $ma+r$ belong to $\Lambda$, and hence
$$X_{r,\lambda}(u_{ma})=m\lambda(a)u_{ma+r}\ne0.$$
For each $a\in\Lambda$, the nonzero terms corresponding to distinct values of $r$ involve distinct basis vectors. Comparing coefficients for all $a\in\Lambda$ therefore proves that the sum in \eqref{eq3.1} is direct. Finally, the operators $X_{0,\lambda}$ are diagonal in the basis $\{u_a:a\in\Lambda\}$, so $\mathfrak h$ is abelian. The injectivity established above gives $\dim\mathfrak h=\dim\Ann(t)=n-1$.
\end{proof}

For the operators occurring in \eqref{eq3.1}, the commutator is
\begin{equation}\label{eq3.2}
 [X_{r,\lambda},X_{s,\mu}]
 =X_{r+s,\,\lambda(s)\mu-\mu(r)\lambda}.
\end{equation}
The deleted indices cause no extra terms. An intermediate index $t$ has coefficient $\mu(t-s)=0$ or $\lambda(t-r)=0$, while an intermediate index $0$ is annihilated by the next operator. If $\nu=\lambda(s)\mu-\mu(r)\lambda$, then $\nu(r+s-t)=0$. If $r+s=t$, both $\lambda(s)$ and $\mu(r)$ vanish, so the right-hand side is zero.

We first define the generalized divergence-free Lie algebra
that occurs below. Let
$$\mathcal A_H=\C[H]=\bigoplus_{a\in H}\C x^a,\qquad x^a x^b=x^{a+b}.$$
For $\lambda\in(\C^n)^*$, define $\partial_\lambda(x^a)=\lambda(a)x^a$, and put $T=\{\partial_\lambda:\lambda\in(\C^n)^*\}.$
The generalized Witt algebra $\mathcal W_H=\mathcal A_HT\subseteq\Der(\mathcal A_H)$
has bracket
\begin{equation}\label{eq3.3}
[x^r\partial_\lambda,x^s\partial_\mu]=x^{r+s}\partial_{\lambda(s)\mu-\mu(r)\lambda}.
\end{equation}

Write $\partial_i(x^a)=a_i x^a$. For $r\in H$ and $1\leq p<q\leq n$, set
$$D_{p,q}(x^r)=x^r\big((r_q-t_q)\partial_p-(r_p-t_p)\partial_q\big).$$
Following the generating-space convention of \cite[(2.4)--(2.5)]{SuXu}, define
$$\mathcal S(0,0,n;t,H)=\Span\{D_{p,q}(x^r):r\in H,\ 1\leq p<q\leq n\}.$$
The degree-$t$ generators vanish. For $r\ne t$, the coefficient functionals of the degree-$r$ generators span $\Ann(r-t)$. Consequently,
\begin{equation}\label{eq3.4}
\mathcal S(0,0,n;t,H)\cong \bigoplus_{\substack{r\in H\\r\ne t}}\{x^r\partial_\lambda:\lambda(r-t)=0\}.
\end{equation}
Its bracket is inherited from $\mathcal W_H$. Indeed, if $\lambda(r-t)=\mu(s-t)=0$, then $\lambda(s)\mu-\mu(r)\lambda$ annihilates $r+s-t$; if $r+s=t$, both $\lambda(s)$ and $\mu(r)$ vanish. Thus \eqref{eq3.3} makes \eqref{eq3.4} a Lie subalgebra of $\mathcal W_H$.

\begin{theorem}\label{thm3.2}
There is a Lie algebra isomorphism
$$\Inder P(H,t)\cong\mathcal S(0,0,n;t,H).$$
In particular, $\Inder P(H,t)$ is simple.
\end{theorem}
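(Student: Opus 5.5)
The plan is to define the map explicitly on the homogeneous pieces and let the earlier results do the work. By the Proposition, every element of $\Inder P(H,t)$ is uniquely a finite sum $\sum_r X_{r,\lambda_r}$ over $r\in H\setminus\{t\}$ with $\lambda_r(r-t)=0$, and for each $r$ the map $\lambda\mapsto X_{r,\lambda}$ is injective. By \eqref{eq3.4}, $\mathcal S(0,0,n;t,H)$ has the same graded shape. So I will define
$$\Phi\Bigl(\sum_r X_{r,\lambda_r}\Bigr)=\sum_r x^r\partial_{\lambda_r}.$$
By directness of both decompositions and the injectivity statements, $\Phi$ is a well-defined linear bijection that preserves degrees. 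The only point to check here is that $x^r\partial_\lambda\neq0$ for $\lambda\ne0$, which is clear because $\partial_\lambda(x^a)=\lambda(a)x^a$ and $H$ spans $\C^n$.

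Next I will verify that $\Phi$ is a Lie homomorphism. Take $X_{r,\lambda},X_{s,\mu}$ with $\lambda(r-t)=\mu(s-t)=0$ and $r,s\ne t$. Formula \eqref{eq3.2} gives $[X_{r,\lambda},X_{s,\mu}]=X_{r+s,\nu}$ with $\nu=\lambda(s)\mu-\mu(r)\lambda$, and \eqref{eq3.3} gives $[x^r\partial_\lambda,x^s\partial_\mu]=x^{r+s}\partial_\nu$ with the same $\nu$. Two cases remain. If $r+s\ne t$, then $\nu(r+s-t)=0$, so both sides lie in the degree-$(r+s)$ components and $\Phi$ matches them. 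If $r+s=t$, the remark after \eqref{eq3.2} shows $\lambda(s)=\mu(r)=0$, so $\nu=0$ and both brackets vanish. Bilinearity then finishes this step. The real content is the careful justification of \eqref{eq3.2} with the deleted indices $u_0=u_t=0$; the paper has already stated it, and I would recheck it as follows. Compute $X_{r,\lambda}X_{s,\mu}(u_a)=\mu(a)\lambda(a+s)u_{a+r+s}$ and subtract the swapped term. The difference is $(\lambda(s)\mu(a)-\mu(r)\lambda(a))u_{a+r+s}$, once I confirm that intermediate indices $a+s\in\{0,t\}$ contribute consistently. When $a+s=t$, the coefficient $\mu(a)=\mu(t-s)=0$, so the term vanishes in both computations. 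When $a+s=0$, the second factor is $\lambda(0)=0$, which agrees with $u_0=0$.

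For simplicity I will invoke the Su--Xu theorem \cite{SuXu}: $\mathcal S(0,0,n;t,H)$ is simple for $n\ge3$ and $H$ spanning $\C^n$ (a nondegenerate group), with the generating-space convention used above. The main obstacle is making sure the hypotheses there match our setting. Su--Xu state results for the divergence-free algebras $\mathcal S(\ell_1,\ell_2,\ell_3;\sigma,\Gamma)$, and I need the case with no polynomial variables ($\ell_1=\ell_2=0$), a shift $t\in H$, and an arbitrary spanning subgroup with no discreteness assumption. If their statement needs a slightly different normalization, I will check that the identification \eqref{eq3.4} matches their generators $D_{p,q}(x^r)$, so that their simplicity result transfers through $\Phi$.

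As a fallback in case that citation is unavailable, I will prove simplicity directly. Take a nonzero ideal $I$. Using the abelian subalgebra $\mathfrak h$, whose weights $a\mapsto\lambda(r)$ separate the degrees $r$ modulo $K=H\cap\C t$ for generic $\lambda\in\Ann(t)$, I can extract a homogeneous element of $I$. Bracketing it with suitable $x^s\partial_\mu$ should then generate every homogeneous component.
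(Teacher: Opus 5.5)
Your proposal is essentially the paper's proof: the same degree-preserving bijection $X_{r,\lambda}\mapsto x^r\partial_\lambda$ coming from \eqref{eq3.1} and \eqref{eq3.4}, brackets matched through \eqref{eq3.2} and \eqref{eq3.3}, and simplicity by citation. Your recheck of the deleted intermediate indices $a+s\in\{0,t\}$ and of the case $r+s=t$ is correct. The only real difference is the reference for simplicity: the paper uses Zhao's simplicity theorem for special-type algebras, whose hypotheses are just nondegeneracy of $\langle\partial_\lambda,a\rangle=\lambda(a)$ (i.e.\ $\Span H=\C^n$) and $\dim T=n\ge3$, so the hypothesis-matching you flagged for Su--Xu is not needed; note also that your fallback would need more than you sketch, since $\operatorname{ad}\mathfrak h$ only isolates an element supported on a single coset $r+K$, not a homogeneous one.
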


\begin{proof}
By \eqref{eq3.1} and \eqref{eq3.4}, the linear map defined by
$X_{r,\lambda}\longmapsto x^r\partial_\lambda$ is bijective. Equations \eqref{eq3.2} and \eqref{eq3.3} show that it preserves Lie brackets.

The pairing $\langle\partial_\lambda,a\rangle=\lambda(a)$ is nondegenerate, since $H$ spans $\C^n$. As $\dim T=n\geq3$, the algebra in \eqref{eq3.4} is simple by \cite[\S2.3, (2.12) and the following paragraph]{ZhaoSpecial}. The simplicity of $\Inder P(H,t)$ follows from the isomorphism above.
\end{proof}

\section{Equivariant symmetric products}\label{sec4}

For an $n$-Lie algebra $L$, put
$$\cM(L)=\Hom_{\Inder L}(S^2L,L).$$
Thus $m\in\cM(L)$ is a symmetric bilinear map satisfying
\begin{equation}\label{eq4.1}
 Dm(x,y)=m(Dx,y)+m(x,Dy)\qquad(D\in\Inder L).
\end{equation}

The notion of a $\delta$-derivation was introduced for Lie algebras by Filippov \cite{FilippovDelta}.

Similarly, for an $n$-Lie algebra $L$ and $\delta\in\C$, a linear map $T:L\to L$ is called $\delta$-derivation if
$$T([x_1,\ldots,x_n])=\delta\sum_{i=1}^n[x_1,\ldots,T(x_i),\ldots,x_n].$$
Set $\cQ(L)=\operatorname{Der}_{1/n}(L)$; equivalently,
\begin{equation}\label{eq4.2}
 \sum_{i=1}^n[x_1,\ldots,T(x_i),\ldots,x_n]=nT([x_1,\ldots,x_n]).
\end{equation}
Every centroid element satisfies this identity, so $\operatorname{Cent}(L)\subseteq\cQ(L)$. Theorem~\ref{thm5.3} will prove
$$\operatorname{Der}_{1/n}(P(H,t))=\C\id_{P(H,t)}=\operatorname{Cent}(P(H,t)).$$
This excludes $\frac1n$-derivations outside the centroid, in addition to the central simplicity established in \cite{PozhidaevCentral}. In contrast, Proposition~\ref{prop5.4} supplies an injective map $A\to\cQ(W(A,\mathfrak g))$ by associative multiplication.

\begin{lemma}\label{lem4.1}
If $\varphi:L\to L'$ is an $n$-Lie algebra isomorphism, then
$$\cQ(L)\longrightarrow\cQ(L'),\qquad T\longmapsto\varphi T\varphi^{-1},$$
and
$$\cM(L)\longrightarrow\cM(L'),\qquad m\longmapsto\bigl((x,y)\mapsto \varphi m(\varphi^{-1}x,\varphi^{-1}y)\bigr)$$
are linear isomorphisms. In particular, their dimensions are isomorphism invariants.
\end{lemma}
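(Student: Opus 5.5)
The plan is a direct transport-of-structure argument. The key preliminary observation is that $\varphi$ conjugates inner derivations to inner derivations. For $x_1,\ldots,x_{n-1}\in L$ and $y'\in L'$, write $y'=\varphi(y)$. Since $\varphi$ preserves brackets,
$$\varphi\operatorname{ad}(x_1,\ldots,x_{n-1})\varphi^{-1}(y')=\varphi[x_1,\ldots,x_{n-1},y]=[\varphi x_1,\ldots,\varphi x_{n-1},y'].$$
So $\varphi\operatorname{ad}(x_1,\ldots,x_{n-1})\varphi^{-1}=\operatorname{ad}(\varphi x_1,\ldots,\varphi x_{n-1})$. By linearity and surjectivity of $\varphi$, the map $D\mapsto\varphi D\varphi^{-1}$ is a bijection $\Inder L\to\Inder L'$. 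The same computation with $\varphi^{-1}$ shows that its inverse is $D'\mapsto\varphi^{-1}D'\varphi$.

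\textbf{The map on $\cQ$.} For $T\in\cQ(L)$ set $T'=\varphi T\varphi^{-1}$, and take $x_i'=\varphi x_i$. Then
$$\sum_i[x_1',\ldots,T'x_i',\ldots,x_n']=\sum_i[\varphi x_1,\ldots,\varphi Tx_i,\ldots,\varphi x_n]=\varphi\Bigl(\sum_i[x_1,\ldots,Tx_i,\ldots,x_n]\Bigr).$$
By \eqref{eq4.2} for $T$, the right-hand side equals $n\varphi T[x_1,\ldots,x_n]=nT'[x_1',\ldots,x_n']$. Hence $T'$ satisfies \eqref{eq4.2}, so $T'\in\cQ(L')$. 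The map $T\mapsto T'$ is clearly linear, and conjugation by $\varphi^{-1}$ gives its inverse by the symmetric argument.

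\textbf{The map on $\cM$.} For $m\in\cM(L)$ set $m'(x,y)=\varphi m(\varphi^{-1}x,\varphi^{-1}y)$. This is bilinear and symmetric. For $D'\in\Inder L'$, write $D'=\varphi D\varphi^{-1}$ with $D\in\Inder L$, and put $x=\varphi u$, $y=\varphi v$. Then
$$D'm'(x,y)=\varphi Dm(u,v)=\varphi\bigl(m(Du,v)+m(u,Dv)\bigr)$$
by \eqref{eq4.1}. The right-hand side equals $m'(D'x,y)+m'(x,D'y)$, since $\varphi^{-1}D'x=Du$ and $\varphi^{-1}D'y=Dv$. Thus $m'\in\cM(L')$. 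Linearity is clear, and the analogous map built from $\varphi^{-1}$ is a two-sided inverse. Consequently $\dim\cQ$ and $\dim\cM$ are isomorphism invariants.

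The only step needing genuine care is the identification $\varphi\,\Inder L\,\varphi^{-1}=\Inder L'$. The equivariance condition in $\cM$ quantifies over all of $\Inder L'$, so we need surjectivity onto $\Inder L'$, not just containment. Since $\varphi$ is bijective, the elementary operators $\operatorname{ad}(\varphi x_1,\ldots,\varphi x_{n-1})$ exhaust all elementary adjoint operators of $L'$, and spans are preserved, which gives equality. No other obstacle is expected; everything else is formal.
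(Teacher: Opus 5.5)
Your proposal is correct and takes the same approach as the paper. The paper's proof is the same transport-of-structure argument: conjugation by $\varphi$ identifies $\Inder L$ with $\Inder L'$, substitution into \eqref{eq4.1} and \eqref{eq4.2} gives the claims, and $\varphi^{-1}$ gives the inverse maps. You write these checks out in full, including the point that the $\cM$ equivariance needs conjugation to be onto $\Inder L'$.
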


\begin{proof}
Conjugation identifies $\Inder L$ with $\Inder L'$. Substitution in \eqref{eq4.1} and \eqref{eq4.2} proves the claims, and transport by $\varphi^{-1}$ gives the inverse maps.
\end{proof}

We now use the $\mathfrak h$-weight decomposition of $P(H,t)$ to restrict the possible values of an equivariant symmetric product. Equivariance under the remaining inner derivations then forces these values to vanish.

\begin{theorem}\label{thm4.2}
We have $\cM(P(H,t))=0$.
\end{theorem}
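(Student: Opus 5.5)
The plan is to use the weight decomposition under $\mathfrak h=\{X_{0,\lambda}:\lambda(t)=0\}$ and then equivariance under the off-diagonal operators $X_{r,\lambda}$.

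\textbf{Step 1: weights.} Let $m\in\cM(P(H,t))$. For $\lambda\in\Ann(t)$, the vector $u_a$ is an $\mathfrak h$-weight vector with weight $\lambda\mapsto\lambda(a)$. Two elements $a,b\in\Lambda$ have the same weight exactly when $a-b\in\C t$, i.e. $a-b\in K$. Applying \eqref{eq4.1} with $D=X_{0,\lambda}$ gives
$$\lambda(c-a-b)\,m_c(a,b)=0 \quad\text{for all }\lambda\in\Ann(t),$$
where $m(u_a,u_b)=\sum_c m_c(a,b)u_c$. Hence
$$m(u_a,u_b)\in\Span\{u_c: c\in a+b+K\}.$$
In each weight space, the map $x\mapsto m(u_a,u_b)$ can be finitely supported along a possibly infinite coset of $K$. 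So the $\mathfrak h$-equivariance alone only pins down the coset, not the individual term.

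\textbf{Step 2: a second weight operator.} To cut down further along $\C t$, I would use an operator that detects the $t$-component. The bracket of $X_{r,\lambda}$ with $X_{-r,\mu}$ yields $X_{0,\nu}$ with $\nu=\lambda(-r)\mu-\mu(r)\lambda$. By \eqref{eq3.1}, this $\nu$ always satisfies $\nu(t)=0$, so degree-zero operators stay inside $\mathfrak h$ and give no new weight information.

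Instead I would exploit operators of degree $r\in K$. Take $r=kt$ with $\lambda(r-t)=(k-1)\lambda(t)=0$; for $k\ne1$ this forces $\lambda(t)=0$. These operators shift along the coset $a+b+K$, and they act by $u_c\mapsto\lambda(c)u_{c+kt}$, where $\lambda(c)$ is constant on the coset. Applying \eqref{eq4.1} to $X_{kt,\lambda}$ gives
$$\lambda(a+b)\,m(u_a,u_b)$$
shifted by $kt$, on the left-hand side. On the right-hand side one gets
$$\lambda(a)\,m(u_{a+kt},u_b)+\lambda(b)\,m(u_a,u_{b+kt}).$$

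\textbf{Step 3: forcing vanishing with general operators.} The main tool will be $D=X_{r,\lambda}$ with $r$ generic, not in $\C t$, and $\lambda\in\Ann(r-t)$ free. In \eqref{eq4.1}, the left side $X_{r,\lambda}m(u_a,u_b)$ has coefficient $\lambda(c)$ on $u_{c+r}$. The right side is
$$\lambda(a)\,m(u_{a+r},u_b)+\lambda(b)\,m(u_a,u_{b+r}).$$
Since $\Ann(r-t)$ is $(n-1)$-dimensional with $n\ge3$, the dependence on $\lambda$ is linear. This yields a functional identity in $\lambda$ restricted to $\Ann(r-t)$:
$$\lambda(c)\,m_c(a,b)=\lambda(a)\,m_{c+r}(a+r,b)+\lambda(b)\,m_{c+r}(a,b+r).$$

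Now choose $a,b$ so that $c$, $a$, $b$ remain linearly independent modulo $\C(r-t)$. This is possible when $c\notin\Span(a,b,r-t)$, and $r$ can be chosen generically from the spanning set $H$, using $n\ge3$. Comparing coefficients then forces $m_c(a,b)=0$ whenever $c$ is independent of $a$, $b$ modulo $r-t$.

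The remaining degenerate cases need separate treatment: $c\in\Span(a,b,r-t)$ for all admissible $r$, which means $c$ lies in $\Span(a,b)$ up to the $K$-shift, and also pairs where $a$, $b$ are collinear. These I would handle by first proving vanishing for generic pairs and then propagating it. The propagation uses the identity again with $\lambda$ chosen so that $\lambda(a)=0$ but $\lambda(b)\neq0$. This expresses $m(u_a,u_{b+r})$ through generic values, which are $0$, and reaches every pair $(a,b)$ by a suitable choice of $r$.

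\textbf{Main obstacle.} The hard part is this degenerate bookkeeping. It requires ensuring that the shifted indices avoid $0$ and $t$ (using the ``all but finitely many $m$'' trick from the proof of Proposition~3.1), and it requires handling the case $c=a+b+kt$ where all three vectors are coplanar with $t$.
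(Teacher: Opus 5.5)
\textbf{The gap: the diagonal coefficient is never killed.} Your Step 3 never forces $m_{a+b}(a,b)$ to vanish. This is the $s=0$ term in $m(u_a,u_b)=\sum_{s\in K}\alpha_s u_{a+b+s}$.

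\begin{itemize}
\item Comparing coefficients in $\lambda$ kills $m_c(a,b)$ only when $c\notin\Span\{a,b,r-t\}$. But $c=a+b$ lies in $\Span\{a,b\}$ for every pair and every $r$.
\item So you never actually prove $m(u_a,u_b)=0$ for any ``generic pair''. The propagation step therefore has no zero to start from.
\item At $c=a+b$ your identity reads $\lambda(a+b)\phi(a,b)=\lambda(a)\phi(a+r,b)+\lambda(b)\phi(a,b+r)$, where $\phi(a,b)=m_{a+b}(a,b)$. This is a closed homogeneous system in the diagonal coefficients alone.
\item Suppose $a+r$, $b+r$, $a+b$, $a+b+r$ all stay in $\Lambda$, which is exactly what you plan to arrange with the ``all but finitely many'' trick. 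Then this system is solved by $\phi\equiv1$, i.e.\ by the group-algebra product $m(u_a,u_b)=u_{a+b}$. Under $u_a\leftrightarrow x^a$, the operators $X_{r,\lambda}$ are truncations of the derivations $x^r\partial_\lambda$ of $\C[H]$ (compare Proposition~\ref{prop4.3} and Section~\ref{sec7}).
\end{itemize}
Hence no argument that avoids the deleted indices $0$ and $t$ can prove $\cM(P(H,t))=0$. The vanishing is a boundary phenomenon.

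\textbf{The missing idea: use $u_0=u_t=0$ on purpose.} For $a,b,t$ linearly independent, the paper takes $\eta$ with $\eta(a+t)=\eta(b)=0$ and $\eta(t)\ne0$.

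\begin{itemize}
\item Then $X_{-a,\eta}$ is inner.
\item It kills $u_a$, even though $\eta(a)=-\eta(t)\ne0$, because the image index is the deleted index $0$.
\item It kills $u_b$ because $\eta(b)=0$.
\item Yet $X_{-a,\eta}u_{a+b+s}=\eta(s-t)u_{b+s}$ is nonzero for every $s\ne t$, in particular for $s=0$.
\end{itemize}
The surviving term $s=t$ is removed by $X_{t+a,\mu}$ with $\mu(a)=\mu(b)=0$ and $\mu(t)\ne0$. Your coefficient comparison can also remove it, though for $n=3$ only with a non-generic $r$ such as $r=t+a$. The degenerate configurations are then reached by propagating from these zeros through an auxiliary $y\in H\setminus\Span\{a,b,t\}$.

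\textbf{A smaller point.} For $n=3$, your requirement that $c,a,b$ be independent modulo $\C(r-t)$ can never be met, since $\C^3/\C(r-t)$ is two-dimensional.
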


\begin{proof}
Put $K=H\cap\C t$. For $v\in H$, define a weight on $\mathfrak h$ by $\chi_v(X_{0,\lambda})=\lambda(v)$. The vector $u_a$ has weight $\chi_a$, and the common annihilator of all functionals vanishing on $t$ is $\C t$. Therefore
$$\chi_a=\chi_b\quad\Longleftrightarrow\quad a-b\in K.$$
If $m\in\cM(P(H,t))$, equivariance under $\mathfrak h$ yields
\begin{equation}\label{eq4.3}
 m(u_a,u_b)=\sum_{s\in K}\alpha_s u_{a+b+s},
\end{equation}
where each such sum is finite and terms with index $0$ or $t$ are omitted. We show that all its coefficients vanish.

\noindent\emph{Step 1: $a,b,t$ are linearly independent.}
Choose $\eta\in(\C^n)^*$ satisfying $\eta(a+t)=\eta(b)=0$ and $\eta(t)\ne0$. Then $X_{-a,\eta}$ is inner and annihilates both inputs: it sends $u_a$ to the deleted index $0$ and has coefficient zero on $u_b$. On the other hand,
$$X_{-a,\eta}(u_{a+b+s})=\eta(s-t)u_{b+s}.$$
The output indices are valid and pairwise distinct because $b\notin\C t$. As $s-t\in\C t$, the coefficient $\eta(s-t)$ vanishes exactly when $s=t$. Equivariance therefore gives $m(u_a,u_b)=\alpha_tu_{a+b+t}$. Choose $\mu$ with $\mu(a)=\mu(b)=0$ and $\mu(t)\ne0$. The inner operator $X_{t+a,\mu}$ annihilates both inputs, whereas
$$X_{t+a,\mu}(u_{a+b+t})=\mu(t)u_{2a+b+2t}\ne0.$$
Thus $\alpha_t=0$, proving
$$m(u_a,u_b)=0\quad\text{if}\quad\dim\Span\{a,b,t\}=3.$$

\noindent\emph{Step 2: $\dim\Span\{a,b,t\}=2$ and $a,b\notin\C t$.}
First suppose $a+b\notin\C t$. Since $H$ spans $\C^n$ and $n\ge3$, choose $y\in H\setminus\Span\{a,b,t\}$ and put $r=b-y$. The vectors $y$ and $r-t=b-y-t$ are independent, so there exists $\eta$ with $\eta(r-t)=0$ and $\eta(y)\ne0$. Both triples $a,y,t$ and $a+b-y,y,t$ are independent. Applying equivariance to the zero value $m(u_a,u_y)$ gives
\begin{align*}
 0&=m(X_{r,\eta}u_a,u_y)+m(u_a,X_{r,\eta}u_y)\\
  &=\eta(a)m(u_{a+b-y},u_y)+\eta(y)m(u_a,u_b)\\
   &=\eta(y)m(u_a,u_b),
\end{align*}
where Step~1 eliminates the first term.

Now suppose $a+b\in\C t$. Choose $q\in H\setminus\Span\{a,b,t\}$ and $\eta$ with $\eta(q)=0$, $\eta(t)\ne0$. The operator $X_{t+q,\eta}$ is inner, and both terms on the right-hand side of
$$X_{t+q,\eta}m(u_a,u_b)=m(X_{t+q,\eta}u_a,u_b)+m(u_a,X_{t+q,\eta}u_b)$$
vanish by Step~1. Formula~\eqref{eq4.3} places $m(u_a,u_b)$ in the span of $u_z$ for $z\in K\setminus\{0,t\}$. For each such $z$,
$$X_{t+q,\eta}(u_z)=\eta(z)u_{z+t+q}.$$
Here $\eta(z)\ne0$, and the output indices are valid and pairwise distinct. Thus $m(u_a,u_b)=0$ also in this case.

\noindent\emph{Step 3: exactly one of $a,b$ belongs to $\C t$.}
By symmetry, write $b=\beta t$, where $\beta\in\C\setminus\{0,1\}$, and assume $a\notin\C t$. Choose $y\in H$ so that $a,t,y$ are independent, and put $r=b-y$. Because $\beta\ne1$, the vector $y$ is not in $\Span\{a,(\beta-1)t-y\}$. Choose $\eta$ satisfying
$$\eta((\beta-1)t-y)=\eta(a)=0,\qquad\eta(y)\ne0.$$
Then $X_{r,\eta}$ is inner, $X_{r,\eta}u_a=0$, and $X_{r,\eta}u_y=\eta(y)u_b$. Equivariance applied to $m(u_a,u_y)=0$ from Step~1 gives $0=\eta(y)m(u_a,u_b)$.

\noindent\emph{Step 4: $a,b\in\C t$.}
Write $a=\alpha t$ and $b=\beta t$, where $\alpha,\beta\in\C\setminus\{0,1\}$. Choose $y\in H\setminus\C t$, put $r=b-y$, and choose $\eta$ with
$$\eta((\beta-1)t-y)=0,\qquad\eta(y)\ne0.$$
Such a functional exists since $\beta\ne1$. Step~3 gives $m(u_a,u_y)=0$. The pair $(a+b-y,y)$ is covered by the second part of Step~2, since both indices are off $\C t$ and their sum
belongs to $\C t$. Hence
\begin{align*}
 0&=m(X_{r,\eta}u_a,u_y)+m(u_a,X_{r,\eta}u_y)\\
  &=\eta(a)m(u_{a+b-y},u_y)+\eta(y)m(u_a,u_b)\\
  &=\eta(y)m(u_a,u_b).
\end{align*}
These cases exhaust $\Lambda\times\Lambda$, proving $m=0$.
\end{proof}

For the $S$-construction, the Leibniz rule makes associative multiplication equivariant under inner derivations. Its nonvanishing when the bracket is nonzero provides the comparison with Theorem~\ref{thm4.2}.

\begin{proposition}\label{prop4.3}
If $S(A,\mathfrak g)$ is nonabelian, then $\cM(S(A,\mathfrak g))\ne0$.
\end{proposition}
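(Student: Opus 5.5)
The natural candidate is the associative multiplication $m(f,g)=fg$ on $A$. It is symmetric and bilinear, so two things must be checked: that it is equivariant under $\Inder S(A,\mathfrak g)$, and that it is nonzero.

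\emph{Equivariance.} It suffices to check \eqref{eq4.1} on the spanning operators $\operatorname{ad}(f_1,\ldots,f_{n-1})$. Expand the determinant \eqref{eq2.2} along the last column:
$$\operatorname{ad}(f_1,\ldots,f_{n-1})(g)=\sum_{i=1}^n c_i\,D_i(g),$$
where each $c_i\in A$ is, up to sign, the $(n-1)$-minor built from the $D_j(f_k)$. Thus every elementary inner derivation is an $A$-linear combination $D=\sum_i c_iD_i$ of the derivations $D_i$. Since $A$ is commutative, such a combination is again a derivation of the associative algebra $A$. Hence
$$D(fg)=D(f)g+fD(g),$$
which is exactly \eqref{eq4.1} for $m$. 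Linearity then extends this to all of $\Inder S(A,\mathfrak g)$, so $m\in\cM(S(A,\mathfrak g))$.

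\emph{Nonvanishing.} This is the only real obstacle: we must show that $A$ does not have zero multiplication. Suppose instead that $A^2=0$. Then every subspace of $A$ is an associative ideal. Because $S(A,\mathfrak g)$ is nonabelian, some bracket is nonzero, so some $D_i\neq0$ and $A\neq0$. The paper's $\mathfrak g$-simplicity then forces $A$ to have no nonzero proper $\mathfrak g$-invariant subspace.

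We derive a contradiction from the nonzero bracket itself. Any element of $A$ of the form $\sum_i c_iD_i(g)$ with $c_i\in A$ lies in $A\cdot A=0$. By the cofactor expansion above, every $n$-fold bracket has this form, so every bracket vanishes. This contradicts the assumption that $S(A,\mathfrak g)$ is nonabelian. Note that the argument never needs $n\ge 2$ beyond the expansion, and in fact it shows directly that a nonzero bracket lies in $A^2$.

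Therefore $A^2\neq0$, so $m\neq0$, and hence $\cM(S(A,\mathfrak g))\neq0$.
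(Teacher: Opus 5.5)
Your proposal is correct and follows essentially the same route as the paper. Both arguments take the multiplication $m(f,g)=fg$, obtain equivariance by expanding the determinant along the last column and applying the Leibniz rule, and get nonvanishing because $A^2=0$ would kill every term of the bracket. Your digression invoking $\mathfrak g$-simplicity in the nonvanishing step is never used and can be deleted.
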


\begin{proof}
Associative multiplication gives a symmetric bilinear map $m_A:S^2A\to A$, $m_A(f,g)=fg$. Expanding \eqref{eq2.2} in its last column and applying the Leibniz rule gives
$$[f_1,\ldots,f_{n-1},fg]_S=g[f_1,\ldots,f_{n-1},f]_S+f[f_1,\ldots,f_{n-1},g]_S.$$
Thus $m_A\in\cM(S(A,\mathfrak g))$. If $m_A=0$, then $A^2=0$, so every term in the defining determinant vanishes, contradicting nonabelianity. Hence $m_A\ne0$.
\end{proof}

\begin{corollary}\label{cor4.4}
The algebra $P(H,t)$ is not isomorphic to any nonabelian full algebra $S(A,\mathfrak g)$.
\end{corollary}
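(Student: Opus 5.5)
The argument is a direct comparison of the invariant $\cM$ on the two sides, using Lemma~\ref{lem4.1}, Theorem~\ref{thm4.2} and Proposition~\ref{prop4.3}. Suppose, for contradiction, that $\varphi:P(H,t)\to S(A,\mathfrak g)$ is an isomorphism of $n$-Lie algebras, where $S(A,\mathfrak g)$ is the nonabelian full algebra on the underlying space $A$, under the hypotheses of Section~\ref{subsec:CK-constructions}.

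By Lemma~\ref{lem4.1}, transport of structure along $\varphi$ gives a linear isomorphism
\[
\cM(P(H,t))\cong\cM(S(A,\mathfrak g)).
\]
Theorem~\ref{thm4.2} shows that the left-hand side is zero. Since $S(A,\mathfrak g)$ is nonabelian, Proposition~\ref{prop4.3} shows that the right-hand side is nonzero, because it contains the associative multiplication $m_A$. This is a contradiction, so no such $\varphi$ exists.

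It remains to check that all hypotheses are met. Proposition~\ref{prop4.3} uses only that $A$ is commutative and associative, that $\mathfrak g\subseteq\Der A$ has dimension $n$, and that the bracket is nonzero. It uses neither unitality nor simplicity of the $n$-Lie algebra. The one point needing care is that $\Inder$ is an intrinsic notion, so $\cM$ is defined without reference to any chosen presentation; this is exactly what Lemma~\ref{lem4.1} records. With that in place the proof is a two-line formality, and I expect no real obstacle.
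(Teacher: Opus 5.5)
Your proposal is correct and is the paper's argument: transport $\cM$ along the putative isomorphism via Lemma~\ref{lem4.1}, then contradict $\cM(P(H,t))=0$ from Theorem~\ref{thm4.2} with $0\neq m_A\in\cM(S(A,\mathfrak g))$ from Proposition~\ref{prop4.3}. Your remark that Proposition~\ref{prop4.3} needs neither unitality nor simplicity is also accurate.
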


\begin{proof}
Apply Theorem~\ref{thm4.2}, Proposition~\ref{prop4.3}, and Lemma~\ref{lem4.1}.
\end{proof}

This comparison concerns the bracket on the underlying vector space $A$. Taking a derived subalgebra and a central quotient can change $\cM$; the central reduction constructed below provides an explicit instance.

\section{\texorpdfstring{$1/n$}{1/n}-derivations for $P(H,t)$ and $W(A, \mathfrak{g})$}\label{sec5}

The calculation of $\cQ(P(H,t))$ reduces to a functional equation for coefficient functions on $H$. We first record a density lemma that justifies the choices of subgroup elements used in the argument.

\begin{lemma}\label{lem5.1}
For every positive integer $m$, the set $H^m$ is Zariski dense in $(\C^n)^m$.  Consequently, finitely many nonzero polynomial conditions on $m$ free vectors can be satisfied simultaneously by vectors in $H$.
\end{lemma}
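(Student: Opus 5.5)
The plan is to reduce Zariski density of $H^m$ in $(\C^n)^m$ to the case $m=1$, and then to the statement that a polynomial vanishing on a spanning subgroup vanishes identically. Since $\Span H=\C^n$, I would first choose $h_1,\dots,h_n\in H$ forming a basis of $\C^n$. Then $H$ contains the lattice $\Gamma=\Z h_1+\cdots+\Z h_n$. After the linear change of coordinates sending $h_i$ to the standard basis vector (an automorphism of $\C^n$ as a variety, hence preserving Zariski closure), it suffices to show that $(\Z^n)^m=\Z^{nm}$ is Zariski dense in $\C^{nm}$. Indeed, a polynomial on $(\C^n)^m$ vanishing on $H^m$ then vanishes on $\Gamma^m$, and this lattice corresponds to $\Z^{nm}$ in the new coordinates.

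The core step is the classical fact that a polynomial $f\in\C[z_1,\dots,z_N]$ vanishing on $\Z^N$ is zero. I would prove it by induction on $N$. For $N=1$, a nonzero polynomial in one variable has finitely many roots, while $\Z$ is infinite. For the inductive step, write $f=\sum_k g_k(z_1,\dots,z_{N-1})z_N^k$. For each fixed $c\in\Z^{N-1}$, the one-variable polynomial $f(c,z_N)$ vanishes on $\Z$, so every $g_k(c)=0$. By induction each $g_k=0$, and hence $f=0$. Applied with $N=nm$, this gives the density of $H^m$.

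For the consequence, let $f_1,\dots,f_k$ be nonzero polynomials in $m$ vector variables. The product $F=f_1\cdots f_k$ is nonzero, because the polynomial ring is an integral domain. By density $F$ does not vanish identically on $H^m$, so some $(v_1,\dots,v_m)\in H^m$ has $F(v_1,\dots,v_m)\ne0$. At that point every $f_i$ is nonzero, which is exactly the simultaneous satisfiability claimed in the lemma.

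No step is a serious obstacle. The only point needing care is that the hypothesis used is exactly that $H$ spans $\C^n$ over $\C$; no finite generation or discreteness is required, because we only use the lattice $\Gamma\subseteq H$ spanned by a basis chosen from $H$.
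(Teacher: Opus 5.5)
Your proposal is correct and follows essentially the same route as the paper's proof. Both choose a basis from $H$, pass to the lattice it generates, reduce by an invertible linear change of coordinates to the density of $\Z^{nm}$ (proved by induction on the number of variables), and apply density to the product of the finitely many nonzero polynomials.
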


\begin{proof}
Choose a complex basis $b_1,\ldots,b_n$ from $H$.  Then $G=\Z b_1+\cdots+\Z b_n\subseteq H$.  A polynomial vanishing on $\Z^N$ is zero, by induction on $N$ and the fact that a nonzero one-variable polynomial has finitely many roots.  Applying this with $N=nm$ after the invertible linear change of coordinates determined by the $b_i$ proves density of $G^m$, and hence of $H^m$.  Apply this density statement to the product of the finitely many nonzero polynomials for the last assertion.
\end{proof}

We will also impose affine constraints that can be eliminated using
addition and subtraction in $H$, such as $a_1+\cdots+a_k=h$ with $h\in H$, by setting $a_k=h-a_1-\cdots-a_{k-1}$.  The remaining variables then range freely over $H$.  The conditions used below require determinants to be nonzero
or vectors to lie outside specified finite sets or proper
affine subspaces. Lemma~\ref{lem5.1} allows these conditions
to be satisfied simultaneously whenever each excluded set
is a proper algebraic subset of the parameter space. We call choices justified in this way \emph{generic choices in $H$}.

Fix $r\in H$.  An $n$-tuple $(a_1,\ldots,a_n)\in\Lambda^n$ is called \emph{$r$-admissible} if
$$a_i+r\in\Lambda\ (1\leq i\leq n), \quad \sum_{i=1}^n a_i+t\in\Lambda, \quad  \sum_{i=1}^n a_i+r+t\in\Lambda.$$

These admissibility conditions allow us to compare coefficients in the $1/n$-derivation identity away from the deleted indices. The following proposition determines the possible coefficient functions for each fixed shift.

\begin{proposition}\label{prop5.2}
Let $c:\Lambda\to\C$ satisfy $c(a)=0$ whenever $a+r\in\{0,t\}$.  Suppose that
\begin{align}\label{eq5.1}
 \sum_{i=1}^n c(a_i) \det(a_1,\ldots,a_i+r,\ldots,a_n)=n\det(a_1,\ldots,a_n) c(a_1+\cdots+a_n+t)
\end{align}
for every $r$-admissible tuple.  Then
$c=\left\{\begin{array}{lllll}
const, & r=0,\\
0, & r\neq 0.
\end{array}\right.$
\end{proposition}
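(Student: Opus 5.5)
The plan is to treat \eqref{eq5.1} as a functional equation on a Zariski dense set of tuples and to extract additive (Cauchy-type) relations for $c$ by varying tuples while keeping the total sum $s=a_1+\cdots+a_n$ fixed. Throughout, all tuples will be \emph{generic choices in $H$} in the sense following Lemma~\ref{lem5.1}. In particular $\det(a_1,\ldots,a_n)\ne0$ and all admissibility conditions hold, since each excluded set is a proper algebraic (affine) subset once the affine constraint on $s$ is eliminated. We write $\det_i(r)=\det(a_1,\ldots,r,\ldots,a_n)$ with $r$ in slot $i$, so that $\det(a_1,\ldots,a_i+r,\ldots,a_n)=\det(a)+\det_i(r)$.

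\emph{Case $r=0$.} Here \eqref{eq5.1} reads $\sum_i c(a_i)=n\,c(s+t)$ after dividing by $\det(a)\neq0$.
\begin{itemize}
\item Replacing $(a_1,a_2)$ by $(a_1+h,a_2-h)$ keeps $s$ fixed. For generic $a_1,a_2,h$ this gives $c(a_1+h)+c(a_2-h)=c(a_1)+c(a_2)$.
\item From this I would deduce that $c(x)=\varphi(x)+\kappa$ on $\Lambda$, with $\varphi:H\to\C$ additive. The standard route defines $\varphi(h)=c(x+h)-c(x)$, checks that this is independent of a generic $x$ and additive in $h$, and then removes genericity by writing an arbitrary element as a difference of generic ones.
\item Substituting back yields $\varphi(s)=n\varphi(s)+n\varphi(t)$, i.e. $(n-1)\varphi(s)=-n\varphi(t)$, for $s$ ranging over a dense subset of $H$.
\item An additive function that is constant on such a set, for instance on both $s$ and $2s$, must vanish, so $\varphi=0$ and $c\equiv\kappa$.
\end{itemize}

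\emph{Case $r\ne0$.} I would specialize one entry to an integer multiple of $r$, say $a_2=kr$ with $k\in\Z\setminus\{0\}$ and $k$ avoiding finitely many values so that $kr,(k+1)r\in\Lambda$. Then $\det_i(r)=0$ for $i\ne2$ and $\det_2(r)=\det(a)/k$. Equation \eqref{eq5.1} becomes
\[
\sum_{i}c(a_i)+\tfrac1k\,c(kr)=n\,c(s+t),\qquad a_2=kr .
\]
For fully generic tuples, with no entry in $\C r$, we keep the general form:
\[
\sum_i c(a_i)\bigl(\det(a)+\det_i(r)\bigr)=n\det(a)\,c(s+t).
\]
\begin{itemize}
\item Applying the same sum-preserving moves $(a_1,a_2)\mapsto(a_1+h,a_2-h)$ gives relations whose weights $\det(a)+\det_i(r)$ vary polynomially in $h$.
\item Since $\det_1(r),\det_2(r)$ change with $h$ while $c(a_3),\ldots,c(a_n)$ are untouched, comparing several values of $h$ should separate the terms.
\item The goal is to show that $c(x)\,\det(\ldots,r,\ldots)$ contributes an additive and simultaneously ``linear-in-the-determinant'' quantity. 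This is only possible if $c(x)=0$ whenever $x\notin\C r$ generically.
\item The values $c$ on the remaining points, including multiples of $r$ and the non-generic locus, are then forced to vanish by the specialized identity with $a_2=kr$. There the other $c(a_i)$ and $c(s+t)$ are already known to be zero, which leaves $\tfrac1k c(kr)=0$. The same argument, with a different generic completion, handles points off the generic locus.
\end{itemize}

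The main obstacle is the $r\ne0$ separation step. The weights $\det(a)+\det_i(r)$ couple $c(a_i)$ to the geometry of the whole tuple, so a clean Cauchy equation is not immediate. My first attempt will be to fix $a_3,\ldots,a_n$ and view the equation as a polynomial identity in the free variable $h\in H$, which is dense by Lemma~\ref{lem5.1}. Both $\det(a)$ and $\det_i(r)$ are linear in $h$ once $a_1=x+h$ and $a_2=y-h$. The hope is that comparing the resulting relations for $h$ and $-h$, and for two different completions $a_3,\ldots,a_n$, forces $c(x+h)\det_1(r)$ to be additive in $h$ with a coefficient that is incompatible unless $c=0$. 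If this proves too tangled, the fallback is to restrict to tuples with $r\in\Span\{a_2,a_3\}$, so that only two terms $\det_i(r)$ survive, and run the argument there.
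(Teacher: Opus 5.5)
\textbf{The case $r=0$.} Your argument here is sound, though it differs from the paper's. The sum-preserving moves give a Cauchy-type relation, hence $c=\varphi+\kappa$ with $\varphi$ additive. Substituting back gives $(n-1)\varphi(s)=-n\varphi(t)$ for all $s\in H\setminus\{0,-t\}$, which forces $\varphi=0$. The paper instead fixes $u_3,\ldots,u_n$ and compares two two-step expansions of $c(x+y+z+2t+2U)$.

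\textbf{The gap is the case $r\neq0$.} This is the heart of the proposition, and you leave it as a ``hope''. The separation mechanism you describe does not work as stated.
\begin{itemize}
\item After setting $a_1=x+h$, $a_2=y-h$, the identity is not a polynomial identity in $h$. The unknown values $c(x+h)$ and $c(y-h)$ depend on $h$ in an arbitrary way.
\item The weights $\det(a)+\det(a_1,\ldots,r,\ldots,a_n)$ attached to the ``untouched'' values $c(a_3),\ldots,c(a_n)$ also vary with $h$. So does the weight $n\det(a)$ on the right-hand side. Comparing several values of $h$ therefore cancels nothing.
\item The claim that an additive, ``linear-in-the-determinant'' contribution forces $c=0$ off $\C r$ is never derived.
\end{itemize}

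\textbf{The missing idea.} Identity \eqref{eq5.1} is imposed on \emph{every} $r$-admissible tuple, including degenerate ones, whereas you insist on $\det(a_1,\ldots,a_n)\neq0$. The paper's argument runs as follows.
\begin{itemize}
\item Take $(p,q,p+q,z_4,\ldots,z_n)$ with $r,p,q,z_4,\ldots,z_n$ a basis, and set $\Delta=\det(p,q,r,z_4,\ldots,z_n)$.
\item The right-hand side vanishes, and the shifts in columns $j\ge4$ contribute zero. The first three shifted determinants are $-\Delta,-\Delta,\Delta$.
\item This gives $c(p+q)=c(p)+c(q)$ on $\Omega=\{a\in\Lambda:a\notin\C r,\ a+r\in\Lambda\}$.
\item A bridging argument then produces an additive $\ell$ with $c=\ell$ on $\Omega$.
\item The dilated tuples $(mb_1,\ldots,mb_n)$ give $(n-1)m\ell(h)=\sum_i q_i\ell(b_i)-n\ell(t)$ with $q=B^{-1}r$. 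The right-hand side is independent of $m$, so $\ell=0$.
\end{itemize}

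\textbf{A repair within your own framework.} Your specialization would also have sufficed had you used it as the main tool rather than only for cleanup.
\begin{itemize}
\item With $a_2=kr$ fixed, every remaining weight equals $\det(a)$. The identity becomes $\sum_{i\ne2}c(a_i)+\frac{k+1}{k}c(kr)=n\,c(s+t)$, with $n-1\ge2$ free entries ranging over $\Omega$.
\item Your $r=0$ argument then makes $c$ a constant $\kappa_0$ on $\Omega$.
\item One generic tuple with all entries in $\Omega$ gives $\kappa_0\sum_i\det(a_1,\ldots,r,\ldots,a_n)=0$. This sum is a nonzero polynomial, as one sees by evaluating at $(r,v_2,\ldots,v_n)$. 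Hence $\kappa_0=0$.
\end{itemize}

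\textbf{Your cleanup step is slightly off.} The surviving equation is $(1+\frac{1}{k})c(kr)=0$, not $\frac1k c(kr)=0$. It must also be run for every $\kappa$ with $\kappa r\in\Lambda$, because $H\cap\C r$ need not lie in $\Z r$. The coefficient $1+\kappa^{-1}$ is nonzero whenever $\kappa r+r\neq0$. The cases $\kappa r+r\in\{0,t\}$ are covered by the boundary hypothesis.
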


\begin{proof}
We divide the argument according to the value of $r$.
\noindent\emph{The case $r\neq0$.}
Set $\Omega=\{a\in\Lambda:a\notin\C r,\ a+r\in\Lambda\}.$
We first prove
\begin{equation}\label{eq5.2}
 c(x+y)=c(x)+c(y) \quad\text{whenever }x,y,x+y\in\Omega.
\end{equation}

Call a pair $(p,q)$ regular if $r,p,q$ are linearly independent and the tuple
$$(p,q,p+q,z_4,\ldots,z_n)$$
is $r$-admissible for some $z_4,\ldots,z_n\in H$ such that $r,p,q,z_4,\ldots,z_n$ is a basis of $\C^n$.  When $n=3$, there are no $z_j$, so regularity includes admissibility of $(p,q,p+q)$.  For a regular pair put
$$\Delta=\det(p,q,r,z_4,\ldots,z_n).$$
The determinant in \eqref{eq5.1} vanishes before any column is shifted.  Among the shifted determinants, only the first three can be nonzero, and they are respectively $-\Delta,-\Delta,\Delta$. Thus
\begin{equation}\label{eq5.3}
 c(p+q)=c(p)+c(q)
\end{equation}
for every regular pair.

Now let $x,y,x+y\in\Omega$ be arbitrary.  A generic choice in $H$ gives $v\in H$ and, for $n\geq4$, the required completion vectors, such that each of
$$(x,v),\qquad (y,x+v),\qquad (x+y,v)$$
is regular.  Indeed, the failures of membership in $\Omega$, linear independence, and admissibility exclude only finitely many proper algebraic subsets of the ambient parameter space $\C^n$ for $v$, with $v$ restricted to the Zariski-dense subgroup $H$.  Once $v$ is fixed, the completion vectors for each pair range over $(\C^n)^{n-3}$, with the vectors restricted to the Zariski-dense set $H^{n-3}$. The corresponding basis determinant is a nonzero polynomial because $H$ spans $\C^n$, while every admissibility failure is a proper affine condition.  Lemma~\ref{lem5.1} therefore applies.  Applying \eqref{eq5.3} three times gives
\begin{align*}
 c(x+v)&=c(x)+c(v),\\
 c(x+y+v)&=c(y)+c(x+v),\\
 c(x+y+v)&=c(x+y)+c(v).
\end{align*}
Eliminating the terms involving $v$ proves \eqref{eq5.2}. This bridge argument also covers $n=3$ when the original triple $(x,y,x+y)$ is not admissible.

For $h\in H$, choose $a\in H$ generically so that $a,a+h\in\Omega$, and define
$$\ell(h)=c(a+h)-c(a).$$
Such an $a$ exists because the rejected values form a finite union of points and translates of the proper subspace $\C r$. This does not depend on $a$.  In fact, if $a$ and $b$ are two admissible choices, choose $v$ generically so that all uses of \eqref{eq5.2} below are valid, and put $w=b-a-v$.  Every required membership in $\Omega$ excludes a point or a translate of $\C r$ in the single free variable $v$, so Lemma~\ref{lem5.1} again gives such a choice.  Then
\begin{align*}
 c(a+h)-c(a)
 &=c(a+h+v)-c(a+v)\\
 &=c(a+h+v+w)-c(a+v+w)\\
 &=c(b+h)-c(b).
\end{align*}
Choosing one base point simultaneously for $h,k,h+k$ shows that $\ell(h+k)=\ell(h)+\ell(k)$.  Moreover, for $a\in\Omega$, choosing $b,a+b\in\Omega$ gives
\begin{equation}\label{eq5.4}
 c(a)=\ell(a).
\end{equation}

An additive map on an arbitrary subgroup $H$ need not admit a
complex-linear extension.  Instead, we use integer dilations to show
that $\ell=0$.  Fix $h\in H\setminus\C r$.  Choose
$b_1,\ldots,b_n\in H\setminus\C r$ such that
$$b_1+\cdots+b_n=h,\qquad B=(b_1,\ldots,b_n)\text{ is invertible}.$$
To justify this choice, take $b_1,\ldots,b_{n-1}$ freely and put $b_n=h-\sum_{i<n}b_i$.  The determinant becomes $\det(b_1,\ldots,b_{n-1},h)$, which is a nonzero polynomial since $h\neq0$.  Each condition $b_i\notin\C r$ also excludes a proper algebraic subset.  Lemma~\ref{lem5.1} therefore applies.

Write $q=B^{-1}r=(q_1,\ldots,q_n)^T$.  For all but finitely many positive integers $m$, every $m b_i$ belongs to $\Omega$, the tuple $(m b_1,\ldots,m b_n)$ is $r$-admissible, and $m h+t$ belongs to $\Omega$.  Indeed, the vectors $b_i$ and $h$ lie outside $\C r$; all remaining failures require one of finitely many affine equations in $m$, each having at most one solution.  Since
$$\det(m b_1,\ldots,m b_i+r,\ldots,m b_n)=m^n\left(1+\frac{q_i}{m}\right)\det B,$$
equation~\eqref{eq5.1}, together with \eqref{eq5.4}, gives
$$\sum_{i=1}^n\left(1+\frac{q_i}{m}\right)\ell(m b_i)=n\ell(mh+t).$$
The additivity of $\ell$ implies $\ell(mb_i)=m\ell(b_i)$ and
$\ell(mh+t)=m\ell(h)+\ell(t)$.  Hence
$$(n-1)m\ell(h)=\sum_{i=1}^nq_i\ell(b_i)-n\ell(t).$$
The right-hand side is independent of $m$.  Comparing two permissible distinct positive integers shows that $\ell(h)=0$.  Thus $\ell$ vanishes on $H\setminus\C r$.  If $h\in H\cap\C r$, choose $y\in H\setminus\C r$; then $h+y\notin\C r$, so $\ell(h)=\ell(h+y)-\ell(y)=0$.  It follows from \eqref{eq5.4} that $c=0$ on $\Omega$.

If $a\in\Lambda\setminus\C r$ but $a\notin\Omega$, then $a+r\in\{0,t\}$ and the boundary hypothesis gives $c(a)=0$.  It remains to consider $a\in\Lambda\cap\C r$.  Write $a=\kappa r$ with $\kappa\in\C\setminus\{0\}$.  If $a+r\in\{0,t\}$, the result is again part of the hypothesis.  Otherwise choose $b_2,\ldots,b_n\in\Lambda\setminus\C r$ generically so that $A=(a,b_2,\ldots,b_n)$ is invertible and $r$-admissible, and so that $A\one+t\notin\C r$.  The free parameter space is $H^{n-1}$: the determinant polynomial is nonzero because $a\neq0$ can be completed to a complex basis, and all remaining failures are proper affine conditions. All terms in \eqref{eq5.1} except the one containing $c(a)$ vanish. Moreover,
$$A^{-1}r=(\kappa^{-1},0,\ldots,0)^T,$$
so the remaining equation is $(1+\kappa^{-1})c(a)=0.$
The coefficient is nonzero because $a+r\neq0$.  Thus $c(a)=0$ and the case $r\neq0$ is complete.

\noindent\emph{The case $r=0$.}
Choose linearly independent $u_3,\ldots,u_n\in\Lambda$ and set
$$V=\Span\{u_3,\ldots,u_n\}, \qquad U=u_3+\cdots+u_n, \qquad  C_0=\sum_{j=3}^n c(u_j).$$
Call $(x,y)$ admissible here when $(x,y,u_3,\ldots,u_n)$ is $0$-admissible.  For every such pair with $\det(x,y,u_3,\ldots,u_n)\neq0$, equation \eqref{eq5.1} reduces to
\begin{equation}\label{eq5.5}
 c(x)+c(y)+C_0=n c(x+y+t+U).
\end{equation}

We claim that $c$ is constant on $\Lambda\setminus V$.  Let $x,z\in\Lambda\setminus V$.  Choose $y\in H$ generically so that \eqref{eq5.5} applies to all four pairs
$$(x,y),\quad (x+y+t+U,z),\quad (y,z),\quad (x,y+z+t+U).$$
The required conditions can be satisfied by choosing the image
of $y$ in the two-dimensional quotient $\C^n/V$ outside finitely
many affine lines. Using the first two pairs gives
\begin{align}\label{eq5.6}
 c(x)+c(y)+n c(z)+(n+1)C_0 =n^2c(x+y+z+2t+2U).
\end{align}
Using the last two gives
\begin{align}\label{eq5.7}
 n c(x)+c(y)+c(z)+(n+1)C_0=n^2c(x+y+z+2t+2U).
\end{align}
Comparison of \eqref{eq5.6} and \eqref{eq5.7} yields $(n-1)(c(x)-c(z))=0$.  Therefore there is $\gamma\in\C$ such that
\begin{equation}\label{eq5.8}
 c(a)=\gamma\qquad(a\in\Lambda\setminus V).
\end{equation}

Finally fix $a\in\Lambda$.  Parameterize the affine constraint $a_1+\cdots+a_n=a-t$ by freely choosing the first $n-1$ vectors in $H$.  Under this parameterization,
$$\det(a_1,\ldots,a_n) =\det(a_1,\ldots,a_{n-1},a-t),$$
which is a nonzero polynomial because $a-t\neq0$.  The conditions $a_i\in\Lambda\setminus V$ exclude further proper algebraic subsets. Lemma~\ref{lem5.1} therefore supplies such a tuple with nonzero determinant.  Its bracket index is $a$, so \eqref{eq5.1} and \eqref{eq5.8} give
$$n\gamma\det(a_1,\ldots,a_n) =n c(a)\det(a_1,\ldots,a_n).$$
Thus $c(a)=\gamma$.  Since $a$ was arbitrary, $c$ is constant.
\end{proof}

To pass from coefficient functions to endomorphisms, we decompose a $1/n$-derivation into components with fixed shifts. Proposition~\ref{prop5.2} then eliminates every nonzero shift and forces the remaining component to be scalar.

\begin{theorem}\label{thm5.3} 
We have $\cQ(P(H,t))=\C\id_{P(H,t)}.$
\end{theorem}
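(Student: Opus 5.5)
The plan is to decompose an arbitrary $T\in\cQ(P(H,t))$ into homogeneous components with fixed shifts and then apply Proposition~\ref{prop5.2} to each shift separately.

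\textbf{Step 1: the decomposition.} For each $a\in\Lambda$, the vector $T(u_a)$ is a finite linear combination of basis vectors. We write
$$T(u_a)=\sum_{r\in H}c_r(a)\,u_{a+r},$$
where we set $c_r(a)=0$ whenever $a+r\in\{0,t\}$, since those indices are deleted. For each fixed $a$ only finitely many $c_r(a)$ are nonzero. We then substitute basis vectors $x_i=u_{a_i}$ into \eqref{eq4.2}. The left side is
$$\sum_i\sum_r c_r(a_i)\det(a_1,\ldots,a_i+r,\ldots,a_n)\,u_{\sum a_j+r+t}.$$
The right side is
$$n\det(a_1,\ldots,a_n)\sum_r c_r\Bigl(\sum a_j+t\Bigr)u_{\sum a_j+t+r}.$$
Both sides are sums over basis vectors indexed by $\sum a_j+t+r$, and distinct $r$ give distinct indices.

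\textbf{Step 2: extracting \eqref{eq5.1}.} Fix $r$ and an $r$-admissible tuple. The admissibility conditions ensure three things: every $a_i+r$ is a genuine index, the output index $\sum a_j+t$ is genuine on the right, and the shifted output $\sum a_j+r+t$ is genuine, so the $u$-coefficient can be read off. Comparing the coefficients of $u_{\sum a_j+r+t}$ on both sides then gives exactly \eqref{eq5.1} for $c=c_r$.

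\textbf{Step 3: the boundary cases.} Some care is needed when $\sum a_j+t\in\{0,t\}$. In that case the bracket on the right is $0$, because $u_0=u_t=0$, and the determinant also vanishes when $\sum a_j=0$. Admissibility excludes these tuples anyway, so no issue arises. The boundary hypothesis $c_r(a)=0$ for $a+r\in\{0,t\}$ holds by our convention. Proposition~\ref{prop5.2} therefore applies to every shift: $c_r=0$ for $r\neq0$, and $c_0$ is constant. Hence $T=\gamma\,\id$.

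\textbf{Step 4: the converse inclusion.} Conversely, $\id$ satisfies \eqref{eq4.2}, since both sides equal $n[x_1,\ldots,x_n]$.

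The main subtle point I expect is the justification of Step 2, namely that the component for each fixed shift $r$ can be isolated. For fixed inputs the sum over $r$ is finite, and the $u$-indices $\sum a_j+t+r$ are distinct for distinct $r$, so comparing coefficients in the basis is legitimate. No finiteness of the set of shifts occurring in $T$ globally is needed, because Proposition~\ref{prop5.2} is applied to each $r$ individually.
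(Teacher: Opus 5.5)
Your proposal is correct and follows the paper's proof essentially step for step. You split $T$ into fixed-shift coefficient functions $c_r$ (with the same boundary convention), extract \eqref{eq5.1} on $r$-admissible tuples by comparing coefficients of $u_{a_1+\cdots+a_n+r+t}$, and then apply Proposition~\ref{prop5.2} shift by shift, using pointwise finiteness to conclude $T=\gamma\id$. The paper packages the coefficient comparison via the grading $\deg u_a=a+\frac{t}{n-1}$. Your explicit check that $\id$ satisfies \eqref{eq4.2} supplies the reverse inclusion, which the paper leaves implicit in its remark that $\operatorname{Cent}(L)\subseteq\cQ(L)$.
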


\begin{proof}
The putting
$$\deg u_a=a+\frac{t}{n-1}$$
makes $P(H,t)$ a graded algebra, with grading group $H+\Z\frac{t}{n-1}\subset\C^n$.  Let $T\in\cQ(P(H,t))$.  For $r\in H$ define its shift-$r$ component by
\begin{equation}\label{eq5.9}
 T_r(u_a)=c_r(a)u_{a+r},
\end{equation}
where $c_r(a)$ is the coefficient of $u_{a+r}$ in $T(u_a)$, and is set to zero when $a+r\notin\Lambda$.  Because $P(H,t)$ is an algebraic direct sum, for every $v\in P(H,t)$ the expansion $T(v)=\sum\limits_{r\in H}T_r(v)$ is finite, although the set of shifts occurring globally need not be finite or countable.

Comparing homogeneous components in the defining identity \eqref{eq4.2} shows that each $T_r$ separately lies in $\cQ(P(H,t))$.  For an $r$-admissible tuple, substitution of
\eqref{eq5.9} gives
$$\sum_{i=1}^n c_r(a_i)\det(a_1,\ldots,a_i+r,\ldots,a_n) = n\det(a_1,\ldots,a_n)c_r(a_1+\cdots+a_n+t).$$
The coefficient function $c_r$ therefore satisfies Proposition~\ref{prop5.2}, including its boundary condition. It follows that $T_r=0$ for $r\neq0$, while $T_0=\gamma\id_{P(H,t)}$ for some $\gamma\in\C$.  Since the decomposition is pointwise finite, this proves $T=\gamma\id_{P(H,t)}$.
\end{proof}

We next obtain a lower bound for $\dim_{\C}\cQ(W(A,\mathfrak g))$ from associative multiplication. This bound, together with Theorem~\ref{thm5.3}, yields the comparison with the $W$-construction.

\begin{proposition}\label{prop5.4}
If $W(A,\mathfrak g)$ is nonabelian, the assignment $a\mapsto L_a$, where $L_a(f)=af$, defines a linear embedding $A\hookrightarrow\cQ(W(A,\mathfrak g))$. Hence
$$\dim_{\C}\cQ(W(A,\mathfrak g))\geq\dim_{\C}A\geq2.$$
\end{proposition}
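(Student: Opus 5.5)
The plan has three parts: show each $L_a$ satisfies the $1/n$-derivation identity \eqref{eq4.2} for the bracket \eqref{eq2.3}, show $a\mapsto L_a$ is injective, and show $\dim A\ge2$.

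\textbf{The identity.} Fix $f_1,\ldots,f_n\in A$. Replacing $f_i$ by $af_i$ changes the $i$th column of the matrix in \eqref{eq2.3} from $(f_i,D_1f_i,\ldots,D_{n-1}f_i)^T$ to
$$\bigl(af_i,\,aD_1f_i+D_1(a)f_i,\ldots,aD_{n-1}f_i+D_{n-1}(a)f_i\bigr)^T=a\,C_i+f_i\,\bigl(0,D_1a,\ldots,D_{n-1}a\bigr)^T,$$
where $C_i$ is the original column. The determinant is multilinear in columns over the commutative ring $A$, so the $i$th summand equals $a[f_1,\ldots,f_n]_W+E_i$. Here $E_i$ is the determinant with $i$th column $f_i(0,D_1a,\ldots,D_{n-1}a)^T$. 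It remains to prove $\sum_iE_i=0$. Expanding each $E_i$ along its $i$th column gives
$$\sum_iE_i=\sum_{k=1}^{n-1}D_k(a)\sum_i(-1)^{i+k+1}f_iM_{k+1,i},$$
with $M_{k+1,i}$ the minors of the original matrix. For fixed $k$, the inner sum is the expansion of the determinant whose $(k+1)$st row has been replaced by the first row $(f_1,\ldots,f_n)$. That matrix has two equal rows, so the inner sum vanishes. Hence $\sum_i[f_1,\ldots,L_af_i,\ldots,f_n]_W=na[f_1,\ldots,f_n]_W=nL_a[f_1,\ldots,f_n]_W$, which is \eqref{eq4.2}. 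This row-repetition cancellation is the heart of the argument; it is routine but must be written carefully with signs.

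\textbf{Injectivity.} Suppose $L_a=0$, so $aA=0$. Then $I=\{x\in A: xA=0\}$ is an ideal of $A$. It is $\mathfrak g$-invariant, because $D(x)y=D(xy)-xD(y)=0$. By $\mathfrak g$-simplicity, $I=0$ or $I=A$. If $I=A$, then $A^2=0$, and every term of the determinant \eqref{eq2.3} is a product of at least two elements of $A$ (the first row times other entries, using $n\ge2$), so the bracket vanishes. This contradicts nonabelianity. Hence $I=0$ and $a=0$.

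\textbf{The bound $\dim A\ge2$.} If $\dim A\le1$, the alternating $n$-linear bracket on a space of dimension less than $n$ vanishes, again contradicting nonabelianity. Linearity of $a\mapsto L_a$ is clear, so $\dim\cQ(W(A,\mathfrak g))\ge\dim A\ge2$. I expect the main obstacle to be only the bookkeeping in the column-expansion step; the remaining parts are short.
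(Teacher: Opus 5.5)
Your proposal is correct and follows the paper's proof. The paper's steps are the same as yours: split the column of $af_i$ by the Leibniz rule, use column multilinearity, kill the extra terms by the cofactor identity for a matrix with the first row repeated, and prove injectivity via the $\mathfrak g$-invariant annihilator $\{x\in A: xA=0\}$. The only difference is the last bound. You use that an alternating $n$-linear bracket vanishes when $\dim A<n$, which in fact gives $\dim A\ge n$. The paper instead argues that $\dim A=1$ would force $\dim\Der(A)\le\dim\End_{\C}(A)=1<n-1=\dim\mathfrak g$.
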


\begin{proof}
For $f,a\in A$, write
$$u(f)=(f,D_1f,\ldots,D_{n-1}f)^T,\qquad v(a)=(0,D_1a,\ldots,D_{n-1}a)^T.$$
The Leibniz rule gives $u(af)=a u(f)+f v(a)$.  Let $C=(u(f_1),\ldots,u(f_n))$, and denote by $C_i(v(a))$ the matrix obtained by replacing column $i$ with $v(a)$.  Multilinearity of the determinant gives
\begin{align}\label{eq5.10}
 \sum_{i=1}^n[f_1,\ldots,af_i,\ldots,f_n]_W=na\det(C)+\sum_{i=1}^n f_i\det C_i(v(a)).
\end{align}
If $\mathcal C_{ki}$ is the $(k,i)$-cofactor of $C$, with rows numbered from $0$, then
$$\sum_{i=1}^n f_i\det C_i(v(a))=\sum_{k=1}^{n-1}D_k(a)\sum_{i=1}^n C_{0i}\mathcal C_{ki}=0.$$
The last equality is the cofactor identity obtained by replacing row $k$ of $C$ with row $0$.  Equation \eqref{eq5.10} is exactly the condition $L_a\in\cQ(W(A,\mathfrak g))$.

It remains to prove injectivity.  The annihilator
$$\Ann(A)=\{a\in A:aA=0\}$$
is a $\mathfrak g$-invariant ideal: if $aA=0$, then $D(a)b=D(ab)-aD(b)=0$ for all $D\in\mathfrak g$ and $b\in A$.  The $\mathfrak g$-simplicity of $A$ implies that $\Ann(A)$ is either $0$ or $A$. The latter case gives $A^2=0$ and makes the bracket \eqref{eq2.3} identically zero, contrary to nonabelianity. Hence $\Ann(A)=0$, so $L_a=0$ implies $a=0$.

Finally, $\dim A\neq1$: otherwise $\dim\Der(A)\leq\dim\End_{\C}(A)=1$, whereas the prescribed subalgebra $\mathfrak g\subseteq\Der(A)$ has dimension $n-1\geq2$.
\end{proof}

\begin{corollary}\label{cor5.5}
The algebra $P(H,t)$ is not isomorphic to any simple nonabelian $W(A,\mathfrak g)$.
\end{corollary}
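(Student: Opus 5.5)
The argument will run exactly parallel to Corollary~\ref{cor4.4}, with the invariant $\cQ$ taking the place of $\cM$. Suppose, for contradiction, that $\varphi:P(H,t)\to W(A,\mathfrak g)$ is an isomorphism of $n$-Lie algebras, where $W(A,\mathfrak g)$ is simple and nonabelian and satisfies the hypotheses of Section~\ref{subsec:CK-constructions}. By Lemma~\ref{lem4.1}, conjugation by $\varphi$ gives a linear isomorphism $\cQ(P(H,t))\cong\cQ(W(A,\mathfrak g))$. Hence
$$\dim_{\C}\cQ(P(H,t))=\dim_{\C}\cQ(W(A,\mathfrak g)).$$

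We then compute the two sides separately. Theorem~\ref{thm5.3} gives $\cQ(P(H,t))=\C\id_{P(H,t)}$, so the left-hand side equals $1$. For the right-hand side, $W(A,\mathfrak g)$ is nonabelian, so Proposition~\ref{prop5.4} applies. It provides the embedding $a\mapsto L_a$ of $A$ into $\cQ(W(A,\mathfrak g))$, and therefore $\dim_{\C}\cQ(W(A,\mathfrak g))\geq\dim_{\C}A\geq2$. This contradicts the equality of dimensions, and so no such $\varphi$ exists.

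I expect essentially no obstacle, since all the work has already been done in Theorem~\ref{thm5.3} and Proposition~\ref{prop5.4}. The only point to check is that Proposition~\ref{prop5.4} needs only nonabelianity together with the standing $\mathfrak g$-simplicity of $A$, and both are part of the hypotheses. In particular, simplicity of the $n$-Lie algebra $W(A,\mathfrak g)$ is stronger than what is actually used. One could also observe directly that $\dim_{\C}A=\dim_{\C}P(H,t)$ is infinite, but the bound $\dim_{\C}A\geq2$ already suffices.
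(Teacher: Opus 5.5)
Your proposal is correct and matches the paper's proof: Lemma~\ref{lem4.1} transports $\cQ$ across the isomorphism, Theorem~\ref{thm5.3} gives $\dim_{\C}\cQ(P(H,t))=1$, and Proposition~\ref{prop5.4} gives $\dim_{\C}\cQ(W(A,\mathfrak g))\geq2$. You are also right that only nonabelianity and the standing $\mathfrak g$-simplicity of $A$ are used, not simplicity of the $n$-Lie algebra.
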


\begin{proof}
Use Theorem~\ref{thm5.3}, Proposition~\ref{prop5.4}, and the functoriality in Lemma~\ref{lem4.1}.
\end{proof}

\section{Inner derivations of the SW-construction}\label{sec6}

Let $L=SW(A,D)$ and give it the componentwise $A$-module structure. Set
$$\mathcal K_A=\Inder(L)\cap\End_A(L).$$
By Theorem~\ref{thm3.2}, $\Inder P(H,t)$ is simple. The comparison with the $SW$-construction therefore reduces to proving that $\mathcal K_A$ is a nonzero proper ideal of $\Inder(L)$.

For $c\in A$, define the first-order operator
$$\Delta_c(f^{\langle i\rangle})=(cD(f))^{\langle i\rangle},\qquad 1\leq i\leq n-1.$$
It satisfies
\begin{equation}\label{eq6.1}
 \Delta_c(av)=cD(a)v+a\Delta_c(v) \qquad(a\in A,\ v\in L).
\end{equation}

\begin{lemma}\label{lem6.1}
Every $\delta\in\Inder(L)$ admits a decomposition
\begin{equation}\label{eq6.2}
 \delta=\Delta_c+\phi \qquad\text{with }c\in A\text{ and }\phi\in\End_A(L).
\end{equation}
\end{lemma}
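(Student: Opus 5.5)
Since $\Inder(L)$ is spanned by the elementary operators $\operatorname{ad}(x_1,\ldots,x_{n-1})$, and the set of operators of the form $\Delta_c+\phi$ with $c\in A$, $\phi\in\End_A(L)$ is a linear subspace ($c\mapsto\Delta_c$ is linear and $\End_A(L)$ is a subspace), it suffices to prove \eqref{eq6.2} for a single elementary inner derivation with homogeneous arguments $x_j=g_j^{\langle c_j\rangle}$. By multilinearity we may also assume each argument lies in one colour component. The plan is to compute $\operatorname{ad}(x_1,\ldots,x_{n-1})$ on $f^{\langle i\rangle}$ directly from \eqref{eq2.4}. We then read off which part involves $D(f)$; this part must be $cD(f)$ placed in the same colour, with $c$ independent of $i$. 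The remainder is $A$-linear in $f$.

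\emph{Case analysis by colours.} A bracket is nonzero only if all $n-1$ colours occur among $n$ entries, so exactly one colour is repeated. Hence the colours $c_1,\ldots,c_{n-1}$ of the fixed arguments either (a) are all distinct, so they exhaust $\{1,\ldots,n-1\}$, or (b) have exactly one colour $k$ repeated and exactly one colour $j$ missing.

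In case (a), for input $f^{\langle i\rangle}$ the repeated colour is $i$. Formula \eqref{eq2.4} gives the output
\[
\pm\bigl(\textstyle\prod_{l\ne i}g_l\bigr)\bigl(D(g_i)f-g_iD(f)\bigr)^{\langle i\rangle}.
\]
Its $D(f)$-part is $\mp\bigl(\prod_{l}g_l\bigr)D(f)$. We must check that the sign is the same for every $i$, after reordering the entries to the standard order of \eqref{eq2.4} and taking into account the factor $(-1)^{k+n-1}$. Then $c=\pm g_1\cdots g_{n-1}$ independently of $i$, and the remaining term $f\mapsto\pm\bigl(\prod_{l\ne i}g_l\bigr)D(g_i)f$ is multiplication by an element of $A$ on each component, hence lies in $\End_A(L)$.

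In case (b), only inputs of colour $j$ give a nonzero value. The repeated colour is then $k$, and the $D$ falls on the two fixed elements of colour $k$. Thus the output is $(\text{element of }A)\cdot f$ placed in colour $k$, which is $A$-linear. So this operator lies in $\End_A(L)$ with $c=0$.

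\emph{Main obstacle.} The delicate point is the sign bookkeeping in case (a). Moving the input $f^{\langle i\rangle}$ next to $g_i^{\langle i\rangle}$ costs a sign that depends on $i$, and this must be compensated by the factor $(-1)^{k+n-1}$ with $k=i$, so that the coefficient of $D(f)$ becomes uniform in $i$. I would verify this by placing the arguments in colour order $x_l=g_l^{\langle l\rangle}$ (an overall sign absorbed in $c$). Then I would count transpositions: bringing $f^{\langle i\rangle}$ from the last slot to position $i+1$ takes $n-1-i$ transpositions, and $(-1)^{n-1-i}(-1)^{i+n-1}=1$. This gives a constant sign.

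If a case-(a) computation seems to carry an $i$-dependent sign, it would still be consistent to check against the identity \eqref{eq6.1} together with the derivation property. But I expect the direct count to close the argument.
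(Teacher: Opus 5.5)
Your proposal is correct and follows the paper's proof: you reduce to elementary inner derivations with homogeneous arguments, then treat the full-colour case, which gives $-\Delta_{c_0}$ with $c_0=g_1\cdots g_{n-1}$ plus an $A$-linear multiplication operator, and the case with one colour missing and one repeated, which gives the $A$-linear map $f^{\langle j\rangle}\mapsto\pm(f\theta)^{\langle k\rangle}$. Your transposition count $(-1)^{n-1-i}(-1)^{i+n-1}=1$ is right and explicitly verifies the common sign $\varepsilon$ that the paper only asserts; the only point you leave implicit is that if fewer than $n-2$ colours occur among the fixed arguments, the operator is $0$, which trivially has the required form.
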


\begin{proof}
It is enough to consider an elementary inner derivation $\delta=\operatorname{ad}(v_1,\ldots,v_{n-1})$ with homogeneous fixed arguments. Let $s$ be the number of colors occurring among them. If $s\leq n-3$, then every bracket obtained by inserting a variable misses a color, so $\delta=0$.  Only the following two cases can yield a nonzero operator.

If $s=n-1$, one element of every color occurs. After a permutation of the fixed arguments, write $v_i=a_i^{\langle i\rangle}$ and put $c_0=a_1\cdots a_{n-1}$.  Formula \eqref{eq2.4} gives, up to one common sign $\varepsilon\in\{1,-1\}$,
\begin{align}\label{eq6.3}
 \delta(f^{\langle i\rangle})=\varepsilon\left(\left(\prod_{j\neq i}a_j\right)D(a_i)f-c_0D(f)\right)^{\langle i\rangle}.
\end{align}
The first term in parentheses is $A$-linear in $f$, while the second is $-\varepsilon\Delta_{c_0}$.  Hence \eqref{eq6.2} holds.

If $s=n-2$, one color $p$ is missing and one color $q$ occurs twice. The operator is zero on every $A^{\langle i\rangle}$ with $i\neq p$.  On the remaining component it has the form
\begin{equation}\label{eq6.4}
 f^{\langle p\rangle}\longmapsto\pm(f\theta)^{\langle q\rangle}
\end{equation}
for a fixed $\theta\in A$, and is therefore $A$-linear.  Thus \eqref{eq6.2} holds with $c=0$.  Finite linear combinations of elementary inner derivations retain a decomposition of the stated form.
\end{proof}

The preceding decomposition allows us to prove that $\mathcal K_A$ is stable under commutators with all inner derivations. Nonabelianity and $D$-simplicity will then ensure that this ideal is nonzero and proper.

\begin{proposition}\label{prop6.2}
Assume that $A$ is $D$-simple and that $SW(A,D)$ is nonabelian. Then $\mathcal K_A$ is a nonzero proper ideal of $\Inder(SW(A,D))$.
\end{proposition}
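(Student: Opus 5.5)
The plan has three parts: show that $\mathcal K_A$ is an ideal, that it is nonzero, and that it is proper.

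\emph{Ideal property.} Take $\delta\in\Inder(L)$ and $\phi\in\mathcal K_A$. The commutator $[\delta,\phi]$ is again inner, since $\Inder(L)$ is a Lie algebra, so we only need $A$-linearity. By Lemma~\ref{lem6.1}, write $\delta=\Delta_c+\psi$ with $\psi\in\End_A(L)$. The commutator $[\psi,\phi]$ is clearly $A$-linear. For the other term, \eqref{eq6.1} gives
\[
\Delta_c(\phi(av))=cD(a)\phi(v)+a\Delta_c(\phi(v)),
\]
and also
\[
\phi(\Delta_c(av))=\phi(cD(a)v)+a\phi(\Delta_c v)=cD(a)\phi(v)+a\phi(\Delta_c v).
\]
The terms $cD(a)\phi(v)$ cancel, so $[\Delta_c,\phi](av)=a[\Delta_c,\phi](v)$. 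Hence $[\delta,\phi]\in\mathcal K_A$, and $\mathcal K_A$ is an ideal.

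\emph{Nonzero.} I would use the $s=n-2$ operators \eqref{eq6.4}, which are $A$-linear. It suffices to produce one of them with $\theta\neq0$. Take fixed arguments in which color $q$ appears twice, say as $g^{\langle q\rangle}$ and $h^{\langle q\rangle}$, color $p$ is missing, and every other color $j$ appears once with entry $a_j$. Inserting $f^{\langle p\rangle}$ and applying \eqref{eq2.4} with the ordering adjusted gives $\theta=\pm\prod_j a_j\,(D(g)h-gD(h))$. Nonabelianity means some bracket \eqref{eq2.4} is nonzero, so there exist $f_1,\ldots,f_n$ with
\[
f_1\cdots f_{k-1}\,(D(f_k)f_{k+1}-f_kD(f_{k+1}))\,f_{k+2}\cdots f_n\neq0.
\]
The product $\prod_{j\ne k} f_j\cdot(D(f_k)f_{k+1}-f_kD(f_{k+1}))$ involves $n-1$ factors, while $\theta$ carries only $n-2$ multiplier entries plus the argument $f$. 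I would therefore choose the $a_j$ to be all but one of the extra factors and let $f$ be the remaining factor. Then $\delta(f^{\langle p\rangle})$ equals, up to sign, the nonzero element above, so the operator is nonzero.

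\emph{Proper.} We must show $\Inder(L)\not\subseteq\End_A(L)$. The $s=n-1$ operators \eqref{eq6.3} have the form $\pm\Delta_{c_0}$ plus an $A$-linear part. If all of them were $A$-linear, then $\Delta_{c_0}$ would be $A$-linear, and by \eqref{eq6.1} this forces $c_0D(a)=0$ for every $a\in A$ and every product $c_0=a_1\cdots a_{n-1}$. Let $J$ be the span of all such products $a_1\cdots a_{n-1}$. It is an ideal, and it is $D$-invariant by the Leibniz rule, so $J$ is $0$ or $A$. If $J=A$, then $A\cdot D(A)=0$, and every bracket \eqref{eq2.4} contains a factor $D(f)$ multiplied by $f$'s. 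Since $n\ge3$ there is at least one other factor present, so all brackets vanish, contradicting nonabelianity. If $J=0$, then all $(n-1)$-fold products vanish, and again every bracket, which is a product of $n-1$ elements with one derivative, is zero, a contradiction.

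The main obstacle I expect is the nonzero step: matching the factor count and the signs and colors in \eqref{eq2.4} so that a specific nonvanishing bracket is realized by an operator of type \eqref{eq6.4}.
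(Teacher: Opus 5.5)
Your ideality and nonvanishing arguments match the paper's. You decompose $\delta=\Delta_c+\psi$ by Lemma~\ref{lem6.1} and cancel the $cD(a)\phi(v)$ terms, and you realize a nonzero homogeneous bracket as the value of an operator of type \eqref{eq6.4} on one of its singly-colored arguments. The factor and color bookkeeping you worry about can be replaced by alternation: $[v_1,\ldots,v_n]=\pm\operatorname{ad}(v_1,\ldots,\widehat{v_i},\ldots,v_n)(v_i)$, and when $v_i$ has a non-repeated color the remaining $n-1$ arguments carry exactly $n-2$ colors. Your claim that ``$\theta\neq0$ suffices'' is not literally true unless $\Ann(A)=0$. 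This is harmless, because you then check the value on $f^{\langle p\rangle}$ directly.

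The properness step has a genuine but easily repaired gap. By \eqref{eq6.1}, $A$-linearity of $\Delta_{c_0}$ gives $c_0D(a)v=0$ for every $v\in L$, that is, $c_0D(a)\in\Ann(A)$. It does \emph{not} give $c_0D(a)=0$, since $A$ is not assumed unital. So your case $J=A$ starts from the unproved relation $A\,D(A)=0$. There are two ways to fix this.
\begin{itemize}
\item Prove $\Ann(A)=0$ first, as the paper does for \eqref{eq6.5}. $\Ann(A)$ is a $D$-invariant ideal, and $\Ann(A)=A$ would give $A^2=0$ and kill every bracket. Then $c_0D(a)=0$ is legitimate. The dichotomy on $J$ also becomes unnecessary, because each term of \eqref{eq2.4} is already $c_0D(f_k)$ or $c_0D(f_{k+1})$ with $c_0$ a product of $n-1$ elements.
\item Keep the correct weaker relation $J\,D(A)\,A=0$ through your dichotomy. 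The case $J=0$ is unchanged. For $J=A$ you get $A\,D(A)\,A=0$, which still annihilates every term of \eqref{eq2.4}, because each term has $n-1\ge2$ undifferentiated factors.
\end{itemize}

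Once repaired, your properness argument differs from the paper's. The paper builds an explicit witness. From a nonzero homogeneous bracket it extracts $c,f$ with $cD(f)\neq0$, uses $\Ann(A)=0$ to find $g$ with $cD(f)g\neq0$, and checks that $\delta((fg)^{\langle i\rangle})-f\delta(g^{\langle i\rangle})\neq0$ for the corresponding full-color $\delta$. You instead argue by contradiction over all full-color inner derivations, and you apply $D$-simplicity to the power ideal $A^{n-1}$ rather than to $\Ann(A)$. With the second fix, your route avoids the annihilator lemma entirely. The cost is that it does not name a specific inner derivation that fails to be $A$-linear.
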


\begin{proof}
First we prove ideality.  Let $\eta\in\mathcal K_A$ and $\delta=\Delta_c+\phi\in\Inder L$ be a decomposition from Lemma~\ref{lem6.1}.  Both $\phi$ and $\eta$ are $A$-linear, so $[\phi,\eta]$ is $A$-linear.  Moreover, by \eqref{eq6.1},
\begin{align*}
 [\Delta_c,\eta](av)
 &=\Delta_c(a\eta(v))-\eta(cD(a)v+a\Delta_c(v))\\[1mm]
 &=a[\Delta_c,\eta](v).
\end{align*}
Thus $[\delta,\eta]$ is $A$-linear.  It is also inner because $\Inder L$ is a Lie algebra and both $\delta$ and $\eta$ are inner. Therefore
$$[\Inder(L),\mathcal K_A]\subseteq\mathcal K_A.$$

To see that $\mathcal K_A\neq0$, choose a nonzero bracket of homogeneous elements.  Exactly one color, say $q$, occurs twice and every other color occurs once.  Choose a color $p\neq q$ and regard the element of color $p$ as the variable.  The other $n-1$ elements define an inner derivation of the type \eqref{eq6.4}; it is $A$-linear and nonzero because its value on the omitted element is the chosen nonzero bracket.

It remains to prove properness.  The annihilator $\Ann(A)$ is $D$-invariant, by the same Leibniz-rule argument used in Proposition~\ref{prop5.4}.  Since $A$ is $D$-simple, it is either $0$ or $A$.  If it were $A$, then $A^2=0$ and \eqref{eq2.4} would vanish identically.  Hence
\begin{equation}\label{eq6.5}
\Ann(A)=0.
\end{equation}

Write the chosen nonzero homogeneous bracket, after reordering, as
$$0\neq p\bigl(D(a)b-aD(b)\bigr),$$
where $a,b$ have the repeated color and $p$ is the product of the coefficients in all other colors.  At least one of $pbD(a)$ and $paD(b)$ is nonzero.  In the first case set $c=pb$ and $f=a$; in the second set $c=pa$ and $f=b$.  In either case
\begin{equation}\label{eq6.6}
 cD(f)\neq0.
\end{equation}
Choose one fixed element in each color, taking the coefficient $b$ or $a$ in the repeated color according to the preceding choice and the coefficients whose product is $p$ in the other colors. The resulting inner derivation $\delta$ is of the full-color type \eqref{eq6.3}, and its first-order part is $\pm\Delta_c$.  By \eqref{eq6.5} and \eqref{eq6.6}, there exists $g\in A$ with $cD(f)g\neq0$. For every color $i$, the $A$-linear part of $\delta$ cancels in the difference
$$\delta((fg)^{\langle i\rangle})-f\delta(g^{\langle i\rangle})=\pm(cD(f)g)^{\langle i\rangle}\neq0.$$
Thus $\delta$ is not $A$-linear, so $\mathcal K_A\neq\Inder L$.
\end{proof}

\begin{corollary}\label{cor6.3}
The algebra $P(H,t)$ is not isomorphic to any simple nonabelian $SW(A,D)$.
\end{corollary}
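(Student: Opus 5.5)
The argument is by contradiction, following Corollaries~\ref{cor4.4} and~\ref{cor5.5}. Suppose $\varphi:P(H,t)\to L=SW(A,D)$ is an isomorphism of $n$-Lie algebras, where $L$ is simple and nonabelian and $A$ is $D$-simple as required in Section~\ref{subsec:CK-constructions}.

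\textbf{Step 1: transport inner derivations.} Conjugation $\delta\mapsto\varphi\delta\varphi^{-1}$ carries each elementary operator $\operatorname{ad}(x_1,\ldots,x_{n-1})$ to $\operatorname{ad}(\varphi x_1,\ldots,\varphi x_{n-1})$. Since $\varphi$ is bijective, this map is a Lie algebra isomorphism $\Inder P(H,t)\cong\Inder L$; this is the same observation already used in the proof of Lemma~\ref{lem4.1}.

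\textbf{Step 2: compare simplicity.} By Theorem~\ref{thm3.2}, $\Inder P(H,t)$ is simple, so $\Inder L$ is simple. Proposition~\ref{prop6.2} applies because $A$ is $D$-simple and $L$ is nonabelian. It gives a nonzero proper ideal $\mathcal K_A$ of $\Inder L$, which contradicts simplicity. Hence no such $\varphi$ exists.

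\textbf{Expected obstacle.} There is essentially none, since the work was done in Proposition~\ref{prop6.2} and Theorem~\ref{thm3.2}. The only point to check is that the simplicity of a Lie algebra, in the sense used by Theorem~\ref{thm3.2} (no nonzero proper ideals and nonzero bracket), is preserved under isomorphism. It is, and that is all the argument needs.
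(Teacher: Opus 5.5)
Your proposal is correct and matches the paper's proof: conjugation by the isomorphism identifies $\Inder P(H,t)$ with $\Inder SW(A,D)$, and the simplicity from Theorem~\ref{thm3.2} contradicts the nonzero proper ideal $\mathcal K_A$ supplied by Proposition~\ref{prop6.2}. You also state explicitly that $D$-simplicity of $A$ is part of the standing hypotheses, which is what lets Proposition~\ref{prop6.2} apply; the paper leaves this point implicit.
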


\begin{proof}
An $n$-Lie algebra isomorphism conjugates the corresponding inner derivation Lie algebras. The Lie algebra $\Inder P(H,t)$ is simple by Theorem~\ref{thm3.2}, whereas $\Inder SW(A,D)$ has the nonzero proper ideal from Proposition~\ref{prop6.2}.
\end{proof}

\begin{proof}[Proof of Theorem~\ref{thm1.1}]
The distinguishing invariants are $\cM(L)$, $\cQ(L)$, and $\Inder(L)$ for the $S$, $W$, and $SW$ constructions, respectively. The assertions follow from Corollaries~\ref{cor4.4}, \ref{cor5.5}, and~\ref{cor6.3}.
\end{proof}

\section{A central reduction of the \texorpdfstring{$S$}{S} construction}\label{sec7}

The comparison with the three literal families does not exclude a
realization obtained by taking a derived subalgebra and then a central
quotient.  In fact, every algebra $P(H,t)$ considered here has such a
realization in the $S$ construction.

\begin{proposition}\label{prop7.1}
There exists an $n$-dimensional abelian Lie subalgebra $\mathfrak g_t\subseteq\Der(\C[H])$ such that $\C[H]$ is $\mathfrak g_t$-simple and
$$P(H,t)\cong \frac{[S, \dots, S]}{\langle\mathbbm{1}\rangle},\qquad S=S(\C[H],\mathfrak g_t),$$
where $\langle\mathbbm{1}\rangle$ is a central ideal of $[S, \dots, S]$.
\end{proposition}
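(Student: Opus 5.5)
The plan is to choose $\mathfrak g_t$ so that the $S$-bracket on monomials reproduces Pozhidaev's determinant bracket \eqref{eq2.1}. Then I would identify $\mathcal A(H,t)$ with $S(\C[H],\mathfrak g_t)$, read off the derived subalgebra as $\widetilde{\mathcal A}(H,t)$, and quotient by $\C e_0=\C\mathbbm 1$.

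\emph{Choice of $\mathfrak g_t$.} Fix $\lambda\in(\C^n)^*$ with $\lambda(t)\ne0$, and a basis $\mu_2,\ldots,\mu_n$ of $\Ann(t)$. Then $\lambda,\mu_2,\ldots,\mu_n$ is a basis of $(\C^n)^*$. Put
$$D_1=x^t\partial_\lambda,\qquad D_i=\partial_{\mu_i}\ (2\le i\le n),\qquad \mathfrak g_t=\Span\{D_1,\ldots,D_n\}.$$
By \eqref{eq3.3} we have $[\partial_{\mu_i},\partial_{\mu_j}]=0$ and $[x^t\partial_\lambda,\partial_{\mu_i}]=-\mu_i(t)x^t\partial_\lambda=0$, so $\mathfrak g_t$ is abelian. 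It is $n$-dimensional because the degree-$0$ and degree-$t$ parts are independent. For monomials $f_j=x^{a_j}$, the first row of \eqref{eq2.2} is $\lambda(a_j)x^{a_j+t}$ and row $i$ is $\mu_i(a_j)x^{a_j}$. Factoring out the monomials gives
$$[x^{a_1},\ldots,x^{a_n}]_S=\kappa\det(a_1,\ldots,a_n)\,x^{a_1+\cdots+a_n+t},$$
where $\kappa=\det(\lambda,\mu_2,\ldots,\mu_n)\ne0$. Choose $\gamma$ with $\gamma^{n-1}=\kappa$. Then $e_a\mapsto\gamma x^a$ is an $n$-Lie isomorphism $\mathcal A(H,t)\cong S$. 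This also re-proves that \eqref{eq2.1} is an $n$-Lie bracket, since the $S$-construction with commuting derivations satisfies the fundamental identity.

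\emph{$\mathfrak g_t$-simplicity.} Let $I\ne0$ be a $\mathfrak g_t$-invariant ideal. The operators $\partial_{\mu_i}$ act diagonally, and by the weight remark in the proof of Theorem~\ref{thm4.2} their common eigenspaces are $x^a\C[K]$. Hence $I$ contains a nonzero element $f=\sum_{k\in F}c_kx^{a+k}$ with $F\subseteq K$ finite; take one of minimal support. For $k_0\in F$, consider
$$x^{-t}\bigl(D_1f-\lambda(a+k_0)x^tf\bigr)=\sum_k c_k\,\lambda(k-k_0)\,x^{a+k}\in I.$$
This element has strictly smaller support. It is nonzero if $|F|\ge2$, because $\lambda(t)\ne0$ makes $\lambda$ injective on $K\subseteq\C t$. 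Minimality therefore forces $f$ to be a monomial. A monomial is invertible in $\C[H]$, so $I=\C[H]$. This is the step I expect to need the most care, chiefly for the use of $x^{-t}$ and the injectivity of $\lambda$ on $K$.

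\emph{Derived algebra and quotient.} By the displayed formula, $[S,\ldots,S]$ is spanned by the monomials $x^b$ that are attained, and $b=t$ never occurs since $\sum a_j=0$ forces $\det=0$. Conversely, for $b\ne t$, Lemma~\ref{lem5.1} supplies $a_1,\ldots,a_n\in H$ with $\sum a_j=b-t$ and
$$\det(a_1,\ldots,a_{n-1},b-t)\ne0;$$
the argument is the same as the final step of Proposition~\ref{prop5.2}. Thus $[S,\ldots,S]=\Span\{x^b:b\ne t\}$, which corresponds to $\widetilde{\mathcal A}(H,t)$. The element $\mathbbm 1=x^0$ is central, since $\partial_\mu\mathbbm 1=0$ for every $\mu$. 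The ideal $\langle\mathbbm 1\rangle=\C\mathbbm 1$ therefore corresponds to $\C e_0$, and the quotient is $P(H,t)$ by the definition in Section~\ref{sec2}.
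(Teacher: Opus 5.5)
Your proof is correct and follows essentially the same route as the paper: Euler derivations adapted to $t$ with $D_1=x^t\partial_\lambda$, a minimal-support argument using $x^{-t}D_1$ for $\mathfrak g_t$-simplicity, the identification $[S,\ldots,S]=\Span\{x^b:b\ne t\}$, and the quotient by the central line $\C\mathbbm 1$. The paper differs only cosmetically: it normalizes the functionals so that $\kappa=1$ instead of rescaling basis vectors, and it skips your joint-eigenspace reduction by noting that $I$ is invariant under $\partial_\lambda=x^{-t}D_1$. One slip: for $e_a\mapsto\gamma x^a$ to be a homomorphism you need $\gamma^n\kappa=\gamma$, i.e.\ $\gamma^{n-1}=\kappa^{-1}$ rather than $\gamma^{n-1}=\kappa$; alternatively, replace $\lambda$ by $\kappa^{-1}\lambda$.
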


\begin{proof}
Write $A_H=\C[H]=\bigoplus\limits_{a\in H}\C x^a$, where $x^a x^b=x^{a+b}$.  Choose a basis $\lambda_1,\ldots,\lambda_n$ of $(\C^n)^*$ such that
$$\lambda_1(t)=1,\qquad \lambda_i(t)=0\quad(2\leq i\leq n),\qquad \det\bigl(\lambda_i(a_j)\bigr)_{i,j=1}^n=\det(a_1,\ldots,a_n).$$
The last normalization is obtained by rescaling one of $\lambda_2,\ldots,\lambda_n$.  Define Euler derivations $\partial_i(x^a)=\lambda_i(a)x^a$ and set
$$D_1=x^t\partial_1,\qquad D_i=\partial_i\quad(2\leq i\leq n),\qquad\mathfrak g_t=\Span\{D_1,\ldots,D_n\}.$$
These derivations commute, because the $\partial_i$ commute and $\partial_i(x^t)=0$ for $i\geq2$.  They are $A_H$-linearly independent: after replacing $D_1$ by $x^{-t}D_1$, a relation evaluated on every $x^a$ gives a linear relation among the coordinate functionals $\lambda_i$ on the spanning subgroup $H$.  In particular, $\dim\mathfrak g_t=n$.

Let $I\ne0$ be a $\mathfrak g_t$-invariant associative ideal of $A_H$. Since $x^{-t}D_1=\partial_1$, it is invariant under every $\partial_i$. Choose $f=\sum\limits_{a\in F}c_a x^a\in I\setminus\{0\}$ with the number of nonzero coefficients minimal.  If $F$ contains distinct $a,b$, then $\lambda_i(a)\ne\lambda_i(b)$ for some $i$, and $(\partial_i-\lambda_i(a)\id)f$ is a nonzero element of $I$ with smaller support.  Hence $f$ is a nonzero scalar multiple of a monomial. Every monomial is a unit, so $I=A_H$.  Thus $A_H$ is $\mathfrak g_t$-simple.

The bracket of $S(A_H,\mathfrak g_t)$, taken in the displayed basis of derivations, is
$$[x^{a_1},\ldots,x^{a_n}]=\det(a_1,\ldots,a_n)x^{a_1+\cdots+a_n+t}.$$
Consequently its derived subalgebra is
\begin{equation}\label{eq7.1}
S(A_H,\mathfrak g_t)^{(1)}=\bigoplus_{b\in H\setminus\{t\}}\C x^b.
\end{equation}
Indeed, a bracket with output index $t$ has $a_1+\cdots+a_n=0$, and its determinant vanishes.  Conversely, if $b\ne t$, choose $q_1,\ldots,q_{n-1}\in H$ such that $q_1,\ldots,q_{n-1},b-t$ are linearly independent.  The bracket with input indices
$$q_1,\ldots,q_{n-1},\quad b-t-q_1-\cdots-q_{n-1}$$
is a nonzero multiple of $x^b$.  This proves \eqref{eq7.1}.

Since $t\ne0$, the derived subalgebra contains $1=x^0$.
The determinant bracket makes $\C 1$ central.  Under $x^a\mapsto e_a$,
\eqref{eq7.1} is precisely
$\widetilde{\cA}(H,t)$, and quotienting by $\C 1$ gives $P(H,t)$.
\end{proof}

\section{The realization of \texorpdfstring{$E(H)$}{E(H)} as \texorpdfstring{$W(A,\mathfrak g)$}{W(A,g)}}

We now realize Pozhidaev's second construction directly as an algebra of the $W$ family.  As in the rest of this paper, the ground field in this section is $\C$, so it has characteristic zero.

Let $H\subseteq\C^n$ be an additive subgroup containing $t_1=(1-n,0,\ldots,0)$, and set
$$H_1=\{h=(h_1,\ldots,h_n)\in H:h_1=1\}.$$
Assume that $H_1$ contains $n$ linearly independent vectors over $\C$. Pozhidaev's construction \cite[Section~3, equation~(8)]{PozhidaevCentral} defines
$$E(H)=\bigoplus_{a\in H_1}\C e_a,\qquad[e_{a_1},\ldots,e_{a_n}]=\det(a_1,\ldots,a_n)e_{a_1+\cdots+a_n+t_1}.$$
The output index belongs to $H_1$, since its first coordinate is $n+(1-n)=1$.  By \cite[Theorem~3.2]{PozhidaevCentral}, $E(H)$ is a central simple $n$-Lie algebra.

Choosing a base point in $H_1$ identifies the underlying vector space of $E(H)$ with a group algebra. We then absorb the shift in the transported bracket into a commuting family of derivations.

\begin{proposition}
The algebra $E(H)$ is isomorphic to an $n$-Lie algebra of $W$-type.
\end{proposition}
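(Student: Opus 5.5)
The plan is to identify $E(H)$ with a group algebra by choosing a base point in $H_1$, and then read off the transported bracket as a $W$-bracket whose derivations absorb the shift. Fix $a_0\in H_1$ and put
$$H_0=\{h\in H:h_1=0\},$$
a subgroup with $H_1=a_0+H_0$. Since $H_1$ contains $n$ linearly independent vectors, their differences span the hyperplane $\{v\in\C^n:v_1=0\}$, so $H_0$ spans that hyperplane. Let $A=\C[H_0]=\bigoplus_{h\in H_0}\C x^h$, and identify $e_{a_0+h}$ with $x^h$. The output index of a bracket is
$$a_1+\cdots+a_n+t_1=a_0+(h_1+\cdots+h_n)+s,\qquad s=(n-1)a_0+t_1 .$$
Here $s$ lies in $H$ and has first coordinate $(n-1)+(1-n)=0$, so $s\in H_0$. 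Hence the transported bracket is
$$[x^{h_1},\ldots,x^{h_n}]=\det(a_0+h_1,\ldots,a_0+h_n)\,x^{h_1+\cdots+h_n+s}.$$

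Next rewrite the determinant. Every column $a_0+h_i$ has first entry $1$. Subtracting $(a_0)_k$ times the first row from row $k$, for $k\ge2$, gives
$$\det(a_0+h_1,\ldots,a_0+h_n)=\det\begin{pmatrix}1&\cdots&1\\ \lambda_k(h_1)&\cdots&\lambda_k(h_n)\end{pmatrix}_{k=2,\ldots,n},$$
where $\lambda_k$ is the $k$th coordinate functional. Put $\partial_k(x^h)=\lambda_k(h)x^h$, and define
$$D_1=x^s\partial_2,\qquad D_{k-1}=\partial_k\quad(3\le k\le n).$$
In the $W$-determinant \eqref{eq2.3} evaluated on monomials, each column $i$ then carries the factor $x^{h_i}$, and the single row coming from $D_1$ carries the extra factor $x^s$. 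Factoring these out gives exactly $\det(\cdots)\,x^{\sum h_i+s}$, the transported bracket. If the signs or orientation do not match, I would rescale or reorder the $D_k$ as in the proof of Proposition~\ref{prop7.1}.

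It then remains to check the hypotheses of Section~\ref{subsec:CK-constructions} for $\mathfrak g=\Span\{D_1,\ldots,D_{n-1}\}$.
\begin{itemize}
\item[(i)] $\mathfrak g$ is a Lie subalgebra of $\Der A$. The $\partial_k$ commute, and $[\partial_k,x^s\partial_2]=\lambda_k(s)x^s\partial_2$ for $k\ge3$, so $\mathfrak g$ is closed under brackets. It is solvable, and abelian only if $s$ has no coordinates beyond the second.
\item[(ii)] $\dim\mathfrak g=n-1$. This follows because $\lambda_2,\ldots,\lambda_n$ are linearly independent on $H_0$, since $H_0$ spans the hyperplane.
\item[(iii)] $A$ is $\mathfrak g$-simple. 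Because $x^{-s}$ is a unit, an ideal invariant under $\mathfrak g$ is invariant under all $\partial_k$ with $k\ge2$. These functionals separate the points of $H_0$, since elements of $H_0$ have first coordinate $0$. The minimal-support argument from the proof of Proposition~\ref{prop7.1} then yields a monomial in the ideal, which is a unit, so the ideal is all of $A$.
\end{itemize}

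The main obstacle is the bookkeeping for the shift. One cannot simply distribute $x^{s/(n-1)}$ over all $n-1$ rows, because $s/(n-1)$ need not lie in $H_0$. Placing the whole factor $x^s$ on a single derivation avoids this. The price is that $\mathfrak g$ need not be abelian, so I must confirm that the Cantarini--Kac $W$-construction only requires a Lie subalgebra of derivations. With the definitions given in Section~\ref{subsec:CK-constructions}, it does.
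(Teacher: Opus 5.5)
Your proof is correct and follows the paper's route: a base point in $H_1$, the group algebra of the difference group ($H_0$ for you, $\Gamma\subseteq\C^{n-1}$ in the paper), the row reduction that removes the base point, absorption of the factor $x^s$ into a single derivation, and the minimal-support argument for $\mathfrak g$-simplicity.

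The one real divergence is how you choose the remaining derivations.
\begin{itemize}
\item You keep the coordinate Euler derivations $\partial_3,\ldots,\partial_n$ alongside $x^s\partial_2$. Then $[\partial_k,x^s\partial_2]=\lambda_k(s)\,x^s\partial_2$, so your $\mathfrak g$ is solvable and generally non-abelian.
\item The paper, when $s\neq0$, picks an index $k$ with $s_k\neq0$ and sets $D_k=x^s\delta_k$. It replaces the other derivations by $\delta_i-\frac{s_i}{s_k}\delta_k$, which annihilate $x^s$, so $\mathfrak g$ is abelian.
\end{itemize}
Both choices produce literally the same bracket $x^s\det(\cdots)$ on $A$. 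This is because adding constant multiples of one derivation row to another does not change the determinant.

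Your version is admissible under the wording of Section~\ref{subsec:CK-constructions}, which asks only for a Lie subalgebra of $\Der A$. The $n$-Lie identity is not at risk, since your bracket is transported from $E(H)$.

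The paper's choice gives more: it realizes $E(H)$ with pairwise commuting derivations. That is the usual hypothesis for these determinant constructions, and the one under which the bracket is an $n$-Lie bracket without further argument. So the realization also holds under the stricter convention. You can get this with a one-line modification: put $x^s$ on a coordinate direction where $s$ is nonzero, and subtract the corresponding constant multiples of that Euler derivation from the others.
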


\begin{proof}
Choose $b=(1,\beta)\in H_1$ and put
$$\Gamma=\{\gamma\in\C^{n-1}:(0,\gamma)\in H\},\qquad s=(n-1)\beta.$$
Then
$H_1=\{(1,\beta+\gamma):\gamma\in\Gamma\}$, and $s\in\Gamma$ because $(n-1)b+t_1=(0,s)\in H$.  Differences of $n$ linearly independent elements of $H_1$ give $n-1$ linearly independent vectors in $\{0\}\times\Gamma$. Hence $\Span\Gamma=\C^{n-1}$.

Let $A=\C[\Gamma]$, with $x^\gamma x^\eta=x^{\gamma+\eta}$, and define $\delta_i\in\Der(A)$ by $\delta_i(x^\gamma)=\gamma_i x^\gamma$. The linear bijection $\Phi:E(H)\to A$ given by
$$\Phi(e_{(1,\beta+\gamma)})=x^\gamma$$
transports the bracket to
\begin{equation}\label{eq8.1}
[f_1,\ldots,f_n]=x^s\det\begin{pmatrix}
f_1&\cdots&f_n\\
\delta_1(f_1)&\cdots&\delta_1(f_n)\\
\vdots&&\vdots\\
\delta_{n-1}(f_1)&\cdots&\delta_{n-1}(f_n)
\end{pmatrix}.
\end{equation}
Indeed, subtracting $\beta_i$ times the first row from row $i+1$ removes $\beta$ from the coefficient determinant, while the output index is $(1,\beta+\gamma_1+\cdots+\gamma_n+s)$.

If $s=0$, set $D_i=\delta_i$.  If $s\ne0$, choose $k$ with
$s_k\ne0$ and set
$$D_k=x^s\delta_k,\qquad D_i=\delta_i-\frac{s_i}{s_k}\delta_k\quad(i\ne k).$$
These derivations commute. The $\delta_i$ are $A$-linearly independent, since evaluation of a relation $\sum\limits_i f_i\delta_i=0$ on the units $x^\gamma$ yields $\sum\limits_i\gamma_i f_i=0$ for every $\gamma\in\Gamma$, and $\Gamma$ spans $\C^{n-1}$.  The change from the $\delta_i$ to the $D_i$ is invertible over $A$.  Consequently $\mathfrak g=\Span\{D_1,\ldots,D_{n-1}\}$ is abelian and has dimension $n-1$.

Every $\mathfrak g$-invariant associative ideal $I$ of $A$ is invariant under the $\delta_i$, since each $\delta_i$ is an $A$-linear combination of the $D_j$.  If $I\ne0$, choose $f=\sum\limits_{\gamma\in F}c_\gamma x^\gamma\in I\setminus\{0\}$ with the number of nonzero coefficients minimal.  If $F$ contains distinct $\gamma,\eta$, some $i$ satisfies $\gamma_i\ne\eta_i$, and $(\delta_i-\gamma_i\id)f$ is a nonzero element of $I$ with smaller support.  Therefore $F$ consists of one element.  It follows that $f$ is a unit, so $I=A$.  This proves $\mathfrak g$-simplicity.

For $s\ne0$, subtracting the prescribed multiples of the $\delta_k$ row from the other derivation rows and multiplying that row by $x^s$ multiplies the determinant by $x^s$.  Thus \eqref{eq8.1} is exactly the determinant defining $W(A,\mathfrak g)$ in the ordered basis $D_1,\ldots,D_{n-1}$. The same conclusion is immediate for $s=0$, and $\Phi$ is the desired isomorphism.
\end{proof}

\begin{example}
For $H=\Z^n$, take $\beta=0$.  Then $\Gamma=\Z^{n-1}$ and
$$A=\C[x_1^{\pm1},\ldots,x_{n-1}^{\pm1}],\qquad\mathfrak g=\bigoplus_{i=1}^{n-1}\C x_i\frac{\partial}{\partial x_i},$$
with $\Phi(e_{(1,a_1,\ldots,a_{n-1})})=x_1^{a_1}\cdots x_{n-1}^{a_{n-1}}$.
\end{example}

\begin{remark}
The two realizations in the last two sections describe the relation with the determinant constructions precisely: $E(H)$ is a $W$-algebra, while $P(H,t)$ is a central quotient of the derived algebra of a full $S$-algebra.
\end{remark}

These realizations lead to the following existence question, with passage to derived subalgebras and central quotients explicitly allowed.

\begin{question}
For $n\geq3$, does there exist a simple infinite-dimensional complex $n$-Lie algebra that cannot be obtained from the $S$, $W$, and $SW$ constructions above by taking finitely many derived subalgebras and central quotients?
\end{question}

\medskip\noindent\begin{minipage}{\textwidth}
\textbf{Funding.} The second author was supported by the Basic Research Program of Jiangsu (BK20251784).
 
%\smallskip\noindent\textbf{Data availability.} No datasets were generated or analysed during this study.

\smallskip\noindent\textbf{Conflict of interest.} On behalf of all authors, the corresponding author states that there is no conflict of interest.
\end{minipage}\par

\subsection*{Declaration of generative AI assistance}
During preparation of this manuscript, OpenAI was used to assist with language revision and LaTeX organization. The authors remain fully responsible for the manuscript's mathematical content.

\end{document}